\date{\today}
\newcommand*{\mailto}[1]{\href{mailto:#1}{\nolinkurl{#1}}}
\newcommand{\R}{{\bbR}}
\newcommand{\Z}{{\bbZ}}
\newcommand{\C}{{\bbC}}
\newcommand{\bbC}{{\mathbb{C}}}
\newcommand{\bbN}{{\mathbb{N}}}
\newcommand{\bbR}{{\mathbb{R}}}
\newcommand{\bbZ}{{\mathbb{Z}}}
\newcommand{\cA}{{\mathcal A}}
\newcommand{\cB}{{\mathcal B}}
\newcommand{\cD}{{\mathcal D}}
\newcommand{\cK}{{\mathcal K}}
\newcommand{\cM}{{\mathcal M}}
\newcommand{\bfi}{{\bf i}}
\DeclareMathOperator{\sign}{sign}
\DeclareMathOperator{\supp}{supp}
\DeclareMathOperator{\ran}{ran}
\DeclareMathOperator{\dom}{dom}
\DeclareMathOperator{\tr}{tr}
\DeclareMathOperator*{\conn}{conn}
\DeclareMathOperator*{\sep}{sep}
\DeclareMathOperator*{\triv}{triv}
\newcommand{\SL}{\mathrm{SL}}
\newcommand{\no}{\notag}
\newcommand{\lb}{\label}
\newcommand{\wti}{\widetilde}
\newcommand{\hatt}{\widehat}
\newcommand{\bi}{\bibitem}
\renewcommand{\ge}{\geqslant}
\renewcommand{\le}{\leqslant}
\let\geq\geqslant
\let\leq\leqslant
\definecolor{purple}{rgb}{.5,0,1}
\newcommand{\set}[1]{{\left\{ {#1} \right\}}}
\def\theequation{\@arabic\c@equation}
\numberwithin{equation}{section}
\newtheorem{theorem}{Theorem}[section]
\newtheorem{lemma}[theorem]{Lemma}
\newtheorem{hypothesis}[theorem]{Hypothesis}
\newtheorem{step}{Step}
\theoremstyle{remark}
\newtheorem{remark}[theorem]{Remark}
\theoremstyle{definition}
\newtheorem{case}{Case}
\newtheorem{subcase}{Case}
\numberwithin{subcase}{case}
\begin{document}
	
	\numberwithin{equation}{section}
	\allowdisplaybreaks
	
	\title[Localization for Singularly Perturbed Laplacians]{Random Hamiltonians with Arbitrary Point Interactions}

	\author[D.\ Damanik]{David Damanik}
	\address{ Department of Mathematics, Rice University, Houston, TX 77005, USA}
	\email{\mailto{damanik@rice.edu}}
	\thanks{D.\ D.\ was supported in part by NSF grant DMS--1700131 and by an Alexander von Humboldt Foundation research award.}
		
	\author[J. Fillman]{Jake Fillman}
	\address{Department of Mathematics, Texas State University, San Marcos, TX 78666, USA}
	\email{\mailto{fillman@txstate.edu}}
		
	\author[M.\ Helman]{Mark Helman}
		\address{ Department of Mathematics, Rice University, Houston, TX 77005, USA}
	\email{\mailto{mh84@rice.edu}}
	\author[J.\ Kesten]{Jacob Kesten}
		\address{ Department of Mathematics, Rice University, Houston, TX 77005, USA}
	\email{\mailto{jgk3@rice.edu}}
	\author[S.\ Sukhtaiev]{Selim Sukhtaiev}
	\address{ Department of Mathematics, Rice University, Houston, TX 77005, USA}
	\email{\mailto{sukhtaiev@rice.edu}}
	\thanks {S.S.\ was supported in part by an AMS-Simons travel grant, 2017-2019}
	

	\date{\today}
	\keywords{Anderson localization, Laplace operator}


	\vspace*{-3mm}
	
	\maketitle
	
	\begin{abstract}
	 We consider disordered Hamiltonians given by the Laplace operator subject to arbitrary random self-adjoint singular perturbations supported on random discrete subsets of the real line. Under minimal assumptions on the type of disorder, we prove the following dichotomy: Either every realization of the random operator has purely absolutely continuous spectrum or spectral and exponential dynamical localization hold.  In particular, we establish Anderson localization for Schr\"odinger operators with Bernoulli-type random singular potential and singular density.
	\end{abstract}

\tableofcontents

	\section{Introduction}

	\subsection{Overview} The spectral theory of Schr\"odinger operators with singular potentials, originally motivated by the Kronig--Penney model from solid state physics, has been of interest since at least 1961 when Berezin and Faddeev \cite{BF} gave a mathematically rigorous treatment of $-\Delta+\varepsilon\delta$, where $\varepsilon$ is a real parameter and $\delta$ denotes a Dirac delta distribution. An illuminating discussion of this subject together with historical remarks and relevant references can be found in the classical monograph \cite{AGHH}.
	
	The main focus of this paper is on Anderson localization for random Hamiltonians with \emph{arbitrary} point interactions under \emph{minimal assumptions} on the randomness. The first relevant work in this direction is due to Delyon, Simon, and Souillard \cite{DSS}. They established spectral localization for
	$-\Delta+\sum_{j\in\bbZ} \lambda_j(\omega)\delta(x-j)$,
	where $\{\lambda_j\}_{j\in\bbZ}$ is a sequence of independent identically distributed random variables whose common distribution has a sufficiently regular nontrivial absolutely continuous part. More recently, Hislop, Kirsch, and Krishna \cite{HKK05}, \cite{HKK19} proved Anderson localization (in suitable energy regions) and studied eigenvalue statistics for the same model in dimensions $d=1,2,3$. Localization and zero-measure spectrum for closely related quantum graph models were established in \cite{DLS, DFS}.
	
	The principal achievement of this paper is twofold. First, we cover arbitrary (as discussed in \cite[Section 3.4]{K96}) self-adjoint second order differential operators with coefficients supported on a discrete set $\{t_j\}_{j\in\bbZ}$.  Similar to the Kronig--Penney model, these operators are realized via self-adjoint vertex conditions imposed at every $t_j$. Second, we make no assumptions on the regularity of the common probability distribution of i.i.d.\ random variables in question, contrary to all previously considered Kronig--Penney type random models. Such a level of generality is essential in several random quantum graph models where the random variables take integer values representing geometric characteristics of graphs, e.g., the number of edges, cf. \cite{DFS}.
	
	The main ingredient of the proof is the fact that the Lyapunov exponent is positive away from a discrete set of exceptional energies, which we establish in Theorem \ref{t:posExps}. It is worth noting that the underlying one-step transfer matrix takes a rather general form given by a product of the monodromy matrix of the free Hamiltonian and an {\it arbitrary} $\SL(2,\bbR)$ matrix, see \eqref{2.2}. The latter describes the general self-adjoint vertex condition mentioned above, which takes the form
	\begin{align}\lb{new1.1}
	 \begin{bmatrix}
	u(t_j^+)\\
	u'(t_j^+)
	\end{bmatrix}
	=
	B_j \begin{bmatrix}
	u(t_j^-)\\
	u'(t_j^-)
	\end{bmatrix}, \quad B_j\in\SL(2,\bbR).
	\end{align}
	Having established positivity of Lyapunov exponents, we proceed with the proof of localization following \cite{BuDaFi} and its continuum versions \cite{BuDaFi2, DFS}; see Theorem~\ref{main1}.

	\subsection{Main result}
	To begin, we discuss self-adjoint realizations of the Laplace operator subject to singular perturbations supported on a uniformly discrete set of vertices
	\begin{equation}\lb{1.1b}
	\{t_j\}_{j\in\Z}\subset \R,\quad \inf\limits_{j\in\bbZ}(t_{j+1}-t_j)>0.
	\end{equation}
	Let $H_{\min}$ be the operator acting in $L^2(\R)$ and given by
	\begin{align}
	&H_{\min}u:=-u'', \ u\in\dom(H_{\min}),\\
	&\dom(H_{\min}):= \{u\in \hatt H^2(\R): u(t_j^{\pm})=u'(t_j^{\pm})=0, \ j\in\bbZ\},
	\end{align}
	where $\hatt H^2(\R):=\oplus_{j\in\Z} H^2(t_j, t_{j+1})$ denotes the direct sum of Sobolev spaces.
	This operator is symmetric and has infinite deficiency indices. Its adjoint $H_{\max}:=H_{\min}^*$ is given by
\[
H_{\max}u:=-u'', \ u\in\dom(H_{\max})=\hatt H^2(\R),
\]	
see \cite[Section III.2.1]{AGHH}. All self-adjoint extensions $H$ of $H_{\min}$ (automatically satisfying $H_{\min}\subset H=H^*\subset H_{\max}$) can be described by means of vertex conditions imposed at every $t_j$.  Define
	\begin{align}
	&{\mathscr M}_{\triv}:=\left\{\left(\begin{bmatrix}\lb{1.3}
	1&0\\
	0&1
	\end{bmatrix},\, e^{\bfi\theta}\begin{bmatrix}
	1&0\\
	0&1
	\end{bmatrix} \right)\Big| \theta\in[0,2\pi) \right\}, \\
	&{\mathscr M}_{\conn}:=\left\{\left(\begin{bmatrix}
	1&0\\
	0&1
	\end{bmatrix}, e^{\bfi\theta}\begin{bmatrix}
	\alpha&\beta\\
	\gamma&\delta
	\end{bmatrix} \right)\Big|\hspace{.3cm} \begin{matrix}
\theta\in[0,2\pi), \\
\{\alpha, \beta, \gamma, \delta\}\subset\R, \\ \alpha\delta-\gamma\beta=1
	\end{matrix}\right\},\lb{1.1d}\\
	&{\mathscr M}_{\sep}:=\left\{\left(\begin{bmatrix}
	x&y\\
	0&0
	\end{bmatrix}, \begin{bmatrix}
	0&0\\
	w&z
	\end{bmatrix} \right)\Big|\  \{x,y,w,z\}\subset\R \right\},\\
	&\mathscr M:={\mathscr M}_{\conn}\cup{\mathscr M}_{\sep}. \lb{1.3a}
	\end{align}
	Then by \cite[Theorem 1]{ADK} (see also \cite{AFK, CH, K96, Seba}) the operator  $H=(H_{\max})\upharpoonright_{\dom(H)}$ is a self-adjoint extension of $H_{\min}$ if and only if there exists a sequence
	\begin{equation}\lb{1.1a}
	\{(A_j, B_j)\}_{j\in\Z}\subset\mathscr M,
	\end{equation}	
	such that $\dom(H)=\{ u\in \hatt H^2(\bbR): u\text{\ satisfies \eqref{1.1}} \}$
	\begin{align}\lb{1.1}
	A_j \begin{bmatrix}
	u(t_j^+)\\
	u'(t_j^+)
	\end{bmatrix}
	=
	B_j \begin{bmatrix}
	u(t_j^-)\\
	u'(t_j^-)
	\end{bmatrix}, j\in\bbZ.
	\end{align}
	
	The main goal of this paper is to show that a random choice of vertex conditions \eqref{1.1a}, \eqref{1.1} leads to Anderson localization unless the vertex matrices are drawn in such a way that the resulting Hamiltonians are all unitarily equivalent to an operator with periodic coefficients,	in which case the resultant operators all have purely absolutely continuous spectrum.
	
	  The location of vertices supplies another source of randomness in our model. We assume that the distance $\ell_j$ between $t_{j-1}$ and $t_{j}$ is random. To facilitate this, for a sequence $\{\ell_j\}_{j\in\bbZ}\subset (0,\infty)$, we denote $t_0:=0$ and
	\begin{equation}\lb{1.2}
	t_j:=\begin{cases}
	\sum_{k=1}^{j}\ell_k& j\geq 1,\\
	-\sum_{k=j+1}^{k=0}\ell_k,& j\leq -1.
	\end{cases}
	\end{equation}
	
	\begin{hypothesis}\lb{hyp1.2}
	Fix $L^+\geq L^->0$. Suppose that $\mathscr A\subset [L^-,L^+]\times\mathscr M$ is a bounded set. Let $\wti \mu$ be an arbitrary probability measure on $\mathscr A$ and let $(\Omega, \mu):=(\mathscr A, \wti\mu)^{\bbZ}$.
	\end{hypothesis}

For a sequence
\[
\omega=\{\ell_j,  (A_j, B_j)\}_{j\in\bbZ}\in\Omega,
\]
let $H_{\omega}$ denote the self-adjoint extension of $H_{\min}$ corresponding to the discrete set of vertices $\{t_j\}_{j\in\bbZ}$ given by \eqref{1.2} and the boundary conditions \eqref{1.1}.

	\begin{theorem}\lb{thm2.3}Assume Hypothesis \ref{hyp1.2}.  	
			{\rm(i)} Suppose that there exist \[(\ell_1,  (A_1, B_1)), (\ell_2,  (A_2, B_2))\in \supp \wti \mu\] such  one of the following holds
	\begin{itemize}
	\item $\ell_1\not=\ell_2, A_1=A_2=I_2, B_1\not\in \{e^{\bfi \theta}I_2: \theta\in[0, 2\pi) \},$
	\item $A_1=A_2=I_2, B_1\not\in \{e^{\bfi \theta}B_2:  \theta\in[0, 2\pi)\},$
	\item  $(A_1, B_1)\in \mathscr M_{\sep}$.
	\end{itemize}
	Then $H_{\omega}$ possesses a basis of exponentially decaying eigenfunctions for $\mu$-almost every $\omega\in\Omega$. Furthermore,  there exist a set $ \Omega^*\subset\Omega$ with $\mu(\Omega^*)=1$ and  a discrete set $\mathfrak{D} \subseteq \R$ such that for every compact interval $I\in\bbR\setminus \mathfrak{D}$, every $p>0$ and every compact set $\cK\subset \bbR$,
	\begin{equation}
	\sup\limits_{t>0} \left\||X|^p\chi_I(H_{\omega}) e^{-itH_{\omega}}\chi_{\cK}\right\|_{L^2(\bbR)}<\infty,\ \omega\in\Omega^*,
	\end{equation}
	where $\chi_I(H_{\omega})$ denotes the spectral projection corresponding to $I$, and $|X|^p $ denotes the operator of multiplication by the function $f(x):=|x|^p$.
	
	{\rm(ii)} If the assumptions of part {\rm(i)} are not satisfied then $H_{\omega}$ has purely absolutely continuous spectrum for every $\omega \in \Omega$.
	\end{theorem}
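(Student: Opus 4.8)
We outline the proof, following the two-part structure of the statement.

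\emph{Part (i).} The plan is to reduce to the two central technical results behind the paper. Note first that the three listed conditions split into two regimes. If $\supp\wti\mu$ contains a pair with $(A_1,B_1)\in\mathscr{M}_{\sep}$, then the corresponding vertex condition \eqref{1.1} consists of two separated Robin-type constraints, one on $(u(t_j^+),u'(t_j^+))$ and one on $(u(t_j^-),u'(t_j^-))$, and hence decouples the half-lines meeting at $t_j$. Since this vertex type occurs with positive $\wti\mu$-probability and the coordinates of $\omega$ are independent, for $\mu$-a.e.\ $\omega$ it occurs at infinitely many $t_j$ accumulating at $+\infty$ and at $-\infty$. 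Thus $H_\omega$ is an orthogonal direct sum of self-adjoint realizations of $-u''$ on bounded intervals carrying finitely many point interactions; each summand has compact resolvent, so $H_\omega$ is pure point with eigenfunctions supported in a single interval (in particular compactly supported, hence exponentially decaying). Moreover $e^{-\bfi tH_\omega}$ and $\chi_I(H_\omega)$ preserve this decomposition, and the uniform discreteness \eqref{1.1b} of $\{t_j\}$ forces any compact $\cK$ to meet only finitely many summands, so the dynamical bound follows immediately (with $\mathfrak{D}=\varnothing$).

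If instead one of the first two bullets holds while the separated case does not apply, then $\supp\wti\mu\subset[L^-,L^+]\times\mathscr{M}_{\conn}$, so $A_j\equiv I_2$ and the solutions of $-u''=Eu$ subject to \eqref{1.1} are propagated by the i.i.d.\ one-step transfer matrices of \eqref{2.2}, which up to an irrelevant unimodular scalar have the form $T_j(E)=B_j'\,M_0(\ell_j,E)\in\SL(2,\R)$, with $B_j'$ the $\SL(2,\R)$-part of $B_j$ and $M_0(\ell,E)$ the free monodromy over a length-$\ell$ interval. Under either of the first two bullet conditions Theorem~\ref{t:posExps} applies and produces a discrete set $\mathfrak{D}\subset\R$ off which the Lyapunov exponent is positive, uniformly on compacta, together with the large-deviation estimates that hold with no regularity assumption on $\wti\mu$. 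Feeding this into the localization scheme of Theorem~\ref{main1} — which follows \cite{BuDaFi} and its continuum versions \cite{BuDaFi2,DFS} — produces, for $\mu$-a.e.\ $\omega$, the asserted complete system of exponentially decaying eigenfunctions and the stated exponential dynamical localization on every compact $I\subset\R\setminus\mathfrak{D}$.

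\emph{Part (ii).} The plan is to show that the failure of all three conditions forces every $H_\omega$ to be unitarily equivalent to a periodic point-interaction operator. Failure of the third condition gives $\supp\wti\mu\subset[L^-,L^+]\times\mathscr{M}_{\conn}$, so $A_j\equiv I_2$. Failure of the second then forces all matrices occurring among the $B_j$ to lie in $\{e^{\bfi\theta}B_0:\theta\in[0,2\pi)\}$ for some fixed $B_0\in\SL(2,\R)$. Finally, if $B_0\notin\{e^{\bfi\theta}I_2:\theta\in[0,2\pi)\}$ then no $e^{\bfi\theta}B_0$ lies in $\{e^{\bfi\theta}I_2:\theta\in[0,2\pi)\}$, so the failure of the first condition forces all lengths appearing in $\supp\wti\mu$ to coincide, say $\ell_j\equiv\ell_0$. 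Hence for every $\omega$ we are in one of two (possibly overlapping) cases: (a) each $B_j$ is a unimodular scalar times $I_2$; or (b) $\ell_j\equiv\ell_0$. In both cases the piecewise-constant modulus-one gauge $(Uu)(x)=c(x)u(x)$, with the jump ratios $c(t_j^+)/c(t_j^-)$ chosen recursively (starting from $c\equiv1$ on $(t_0,t_1)$) to cancel the unimodular scalar appearing in $B_j$, conjugates $H_\omega$ onto $-d^2/dx^2$ subject to the phase-free vertex conditions $(I_2,B_0)$ at $\{t_j\}$. In case (a) this is simply the free Laplacian on $L^2(\R)$, with purely absolutely continuous spectrum $[0,\infty)$. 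In case (b) it is the $\ell_0$-periodic operator with vertex matrix $B_0$ at each point of $\ell_0\Z$; its Floquet discriminant $E\mapsto\tr\bigl(B_0 M_0(\ell_0,E)\bigr)$ is a non-constant entire function of $E$ — it could be constant only if $B_0$ were simultaneously diagonal and trace-free, which is incompatible with $\det B_0=1$ — so the spectrum equals the union of its nondegenerate bands $\{\,|\tr(B_0M_0(\ell_0,\cdot))|\le2\,\}$ and is purely absolutely continuous by classical periodic ODE theory (cf.\ \cite{AGHH}). In either case $\sigma(H_\omega)$ is purely absolutely continuous, and since $\omega$ was arbitrary, (ii) follows.

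\emph{Main obstacle.} Granting Theorems~\ref{t:posExps} and \ref{main1}, the argument above is bookkeeping together with the elementary gauge computation of part (ii). The substance is in those two inputs: Theorem~\ref{t:posExps} requires showing that each of the first two bullet conditions makes the group generated by $\{B_j'M_0(\ell_j,E)\}$ noncompact and strongly irreducible away from a discrete energy set (the discreteness via analyticity in $E$ of the relevant Furstenberg obstructions), which must be carried out for the completely general $\SL(2,\R)$ vertex factor; and Theorem~\ref{main1} requires promoting positivity of the Lyapunov exponent to spectral and dynamical localization with \emph{no} regularity assumption on $\wti\mu$, the Bernoulli-type setting handled by the large-deviation/multiscale machinery of \cite{BuDaFi,BuDaFi2,DFS}.
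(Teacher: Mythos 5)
Your proposal is correct and follows essentially the same route as the paper: the separated case is handled by decoupling $H_\omega$ into compact-resolvent block operators (as in Remark~\ref{newrem} and the start of Section~\ref{Section3}), the remaining cases are reduced, after gauging away the unimodular phases, to Hypothesis~\ref{hyp2.1} and then fed into Theorems~\ref{t:posExps} and~\ref{main1}, and part~(ii) is proved by unitary equivalence to the free or periodic point-interaction operator exactly as in Remark~\ref{newrem}. The only difference is that you spell out the gauge transformation, the support case analysis, and the non-constancy of the Floquet discriminant, details the paper leaves implicit.
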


\begin{remark}\lb{newrem}\begin{enumerate}
\item Part~(ii) of Theorem~\ref{thm2.3} follows from general arguments. For example, if every  $(\ell,(A,B)) \in \supp\wti\mu$ satisfies $A = I_2$, $B = e^{\bfi\theta}I_2$ for some $\theta \in [0,2\pi)$, then all realizations of $H_\omega$ will be unitarily equivalent to the free Laplacian, and hence will exhibit purely absolutely continuous spectrum. Similarly, if there exist $\ell > 0$ and $B \in \SL(2,\R)$ such that all elements of $\supp\wti\mu$ are of the form $(\ell,(I_2,e^{\bfi\theta}B))$ for some $\theta$, then every realization of $H_\omega$ will be unitarily equivalent to an operator with periodic point interactions and again the desired localization fails.

In particular, we want to point out that Part~(i) is optimal in the sense that any amount of randomness that pushes one outside the periodic case will produce spectral and dynamical localization.
\medskip

\item In the third case of Theorem~\ref{thm2.3}.(i) (that is, when $\supp\wti\mu \cap \mathscr{M}_{\sep} \neq \emptyset$), $H_\omega$ decouples into an infinite direct sum of operators on finite intervals ($\mu$-almost surely). These operators have compact resolvents by general arguments, so the associated spectra are pure point with compactly supported (hence exponentially decaying) eigenfunctions. Moreover, in this case one has $\mathfrak D=\emptyset$.

\end{enumerate}

\end{remark}

	Let us point out that this result yields localization for several physically relevant Hamiltonians. Let $\{\alpha_j\}_{j\in\bbZ}$ be a sequence of independent identically distributed random variables taking at least two distinct values in a bounded subset of $\bbR$. Define
	$
	A(x):=\sum_{j\in\bbZ} \alpha_j\delta(x-j),
	$
	and introduce formally self-adjoint differential expressions
	\begin{align}
	&\tau_S:=-D_x^2+A,\ \tau_D:=-D_x(1+A)D_x,\ \tau_G:=(\bfi D_x+A)^2-A^2.
	\end{align}
	As was shown in \cite{K96} these differential expressions may be realized as self-adjoint  extensions of $H_{\min}$ corresponding to $\ell_j \equiv 1$ (i.e.\ $t_j=j$) and the following vertex matrices
	\begin{align}
	&\tau_S\sim (A_j, B_j)= \left(\begin{bmatrix}
	1&0\\
	0&1
	\end{bmatrix},\begin{bmatrix}
	1&0\\
	\alpha_j&1
	\end{bmatrix} \right)\\
	&\tau_D\sim (A_j, B_j)= \left(\begin{bmatrix}
	1&0\\
	0&1
	\end{bmatrix},\begin{bmatrix}
	1&-\alpha_j\\
	0&	1
	\end{bmatrix} \right)\\
		&\tau_G\sim (A_j, B_j)= \left(\begin{bmatrix}
	1&0\\
	0&1
	\end{bmatrix},\begin{bmatrix}
	\frac{2+\bfi \alpha_j}{2-\bfi \alpha_j}&0\\
	0&	\frac{2+\bfi \alpha_j}{2-\bfi \alpha_j}
	\end{bmatrix} \right)
	\end{align}
	The first and the second cases satisfy the assumptions of Theorem \ref{thm2.3} part (i), the third case satisfies the assumptions of Theorem \ref{thm2.3} part (ii). Hence, the first two operators exhibit Anderson localization, while the third operator has purely absolutely continuous spectrum.
	
	Another relevant application is to Anderson localization for the Kirchhoff Laplacian on random radial trees discussed in \cite{DFS}, \cite{HiPo}. In this case the Hamiltonian is determined by the following matrices,
	\begin{equation}
		(A_j, B_j)= \left(\begin{bmatrix}
		1&0\\
		0&1
		\end{bmatrix},\begin{bmatrix}
		\sqrt{\beta_j}&0\\
		\frac{\alpha_j}{\sqrt{\beta_j}}&\frac{1}{\sqrt{\beta_j}}
		\end{bmatrix} \right),
	\end{equation}
	where $\{\alpha_j\}\subset \bbR$, $\{\beta_j\}\subset\bbN$ are sequences of i.i.d. random variables taking at least two distinct values in a bounded subset of $\bbR$. Anderson localization for this model was proved in \cite{DFS} and also follows from Theorem \ref{thm2.3}.

	\section{Ergodic Setup and Positive Lyapunov Exponents}

  In this section we assume Hypothesis \ref{hyp1.2} with the additional restriction
	\begin{equation}\lb{2.1}
\mathscr A\subset  [L^-,L^+]\times \{I_2\}\times \SL(2,\bbR).
	\end{equation}
We will explain in the beginning of Section~\ref{Section3} that it is enough to prove Theorem~\ref{thm2.3} assuming \eqref{2.1}.

 \subsection{Ergodic Setup}

Let us discuss the eigenvalue problem $Hu=Eu$, where $H$ is a self-adjoint extension of $H_{\min}$  corresponding to a fixed set of vertices \eqref{1.1b} and vertex conditions \eqref{1.1} with $\{(A_j, B_j)\}_{j\in\bbZ}$ satisfying
\[A_j= I_2,\ B_j\in{\rm SL}(2,\bbR),\  j\in\bbZ.
\]
Consider the differential equation $-f''=Ef$ subject to $f\in H^2(t_j, t_{j+1})$, $j\in\bbZ$ and vertex conditions \eqref{new1.1}. The solution $f$ satisfies
	\begin{align} \label{eq:halfLineTMdiscrete}
	&\begin{bmatrix}
	f(t_j^+)\\
	f'(t_j^+)
	\end{bmatrix}=\cM^{E}(\ell_j, B_j)
	\begin{bmatrix}
	f(t_{j-1}^+)\\
	f'(t_{j-1}^+)
	\end{bmatrix},\ j\in\bbZ,
	\end{align}
	where the mapping $\cM^E :  \mathscr A \to \SL(2,\R)$\footnote{By \eqref{2.1} the second component of all elements of $\cA$ is $I_2$. Consequently, slightly abusing notation, we may view $\cM^E$ as a function of  two variables  $(\ell,B)$.} is defined by
\begin{equation}\lb{2.2}
	\cM^{E}(\ell, B):=B \begin{bmatrix}
\cos\sqrt E\ell &\frac{\sin\sqrt E \ell}{\sqrt E} \\
-\sqrt E \sin\sqrt E\ell & \cos\sqrt E\ell
\end{bmatrix}.
\end{equation}
We note that the entries of $\mathcal  M^E(\ell,B)$ are well-defined analytic functions of $E \in \C$.
	
Define  $M^E: \Omega \to \SL(2,\R)$ by  $M^E(\omega):= \cM^E(\omega_1)$.
Let $T$ denote the left shift acting on $\Omega$ and define the skew product
	\begin{equation}\no
	(T,M^E): \Omega\times\bbR^2\rightarrow  \Omega\times\bbR^2,\ (T,M^{ E})(\omega, v)=(T\omega, M^E(\omega)v).
	\end{equation}
We denote the $n$-step transfer matrix by
\begin{equation}\lb{tm}
	M^{E}_n(\omega)=\prod_{r=n-1}^0M^{E}(T^r\omega)
	=
	M^E(T^{n-1}\omega)\cdots  M^E(T\omega)  M^E(\omega),\ n\in\bbN,\
\end{equation}
and note that the iterates over the skew product are given by $(T,M^{E})^n=(T^n, M^{E}_n)$. One has
\[
\begin{bmatrix}
u(t_n^+)\\ u'(t_n^+)
\end{bmatrix}
=
M^E_n(\omega)
\begin{bmatrix}
u(0^+) \\ u'(0^+)
\end{bmatrix}
\text{ for all }n \in \Z
\]
whenever $u \in \hatt H^2(\R)$ satisfies $-u''=Eu$ and the vertex conditions from \eqref{new1.1} corresponding to $H_\omega$. The Lyapunov exponent is defined by
	\begin{equation}\lb{LE1}
	L(E):=\lim\limits_{n\rightarrow\infty}\frac{1}{n}\int_{\Omega}\log \|M_n^{E}(\omega)\| \, d\mu(\omega).
	\end{equation}
	By Kingman's Subadditive Ergodic Theorem we have
	\begin{equation}\lb{LE2}
	L(E)=\lim\limits_{n\rightarrow\infty}F_n(\omega,E) ,
	\end{equation}
	for $\mu$-almost every $\omega$, where $F_n(\omega,E):=\frac1n\log\|M^E_n(\omega)\|$. Let us point out that there is also a natural continuum cocycle which satisfies
	\[
	\overline{M}^E_x(\omega)\begin{bmatrix}
u(0^+) \\ u'(0^+)
\end{bmatrix}=\begin{bmatrix}
u(x^+) \\ u'(x^+)
\end{bmatrix}
	\]
	in the event that $u$ solves $-u''=Eu$ and satisfies the vertex conditions corresponding to $H_\omega$. By a simple application of Birkhoff's ergodic theorem, the Lyapunov exponent for this cocycle is related to that of the discrete cocycle via
	\[
L(E) = \overline{\ell}\cdot \overline{L}(E),
	\]
	where $\overline\ell: = \int_{\mathscr{A}} \alpha_1 \, d\wti\mu(\alpha)$ denotes the $\wti\mu$-expected value of the length.

	\subsection{Positivity of Lyapunov Exponents}

\begin{hypothesis}\lb{hyp2.1}
Assume Hypothesis \ref{hyp1.2} with
\begin{equation}
\mathscr A\subset  [L^-,L^+]\times \{I_2\}\times \SL(2,\bbR).
\end{equation}
 Suppose that there exist $B_1,B_2\in\SL(2,\bbR)$, $\ell_1, \ell_2\in[L^-, L^+]$ such that
\begin{equation}\lb{2.3b}
(\ell_1,B_1) \not= (\ell_2,B_2), \text{ and }
B_j \neq I_2 \text{ for some } j=1,2,
\end{equation}
\begin{equation}
(\ell_j, I_2, B_j)\in\supp \wti \mu, \quad j=1,2.
\end{equation}
\end{hypothesis}
The main assertion of this subsection is that the Lyapunov exponent is positive away from a discrete set of exceptional energies.

\begin{theorem}\label{t:posExps}
	Assume Hypothesis \ref{hyp2.1}. Then there is a discrete set $\mathfrak D\subseteq \R$ with the property that $L(E) > 0$ for every $E \in \R\setminus \mathfrak D$.
\end{theorem}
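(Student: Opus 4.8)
The plan is to use the Fürstenberg-type criterion for positivity of the Lyapunov exponent of an $\SL(2,\R)$ cocycle, in the version adapted to analytic families of cocycles so as to control the exceptional set. Fix $E \in \R$ with $E \neq 0$ (the point $E=0$ can be absorbed into $\mathfrak D$ at the end) and let $G_E \subseteq \SL(2,\R)$ be the smallest closed subgroup containing the support of the distribution of $M^E(\omega)$, i.e.\ the closed group generated by $\{\cM^E(\ell,B): (\ell,I_2,B)\in\supp\wti\mu\}$. By Fürstenberg's theorem, $L(E)>0$ provided $G_E$ is not compact and acts without a finite invariant set of directions in $\R\mathbb P^1$ (equivalently, $G_E$ is not conjugate into $\mathrm{SO}(2)$ and does not fix a one- or two-point subset of $\R\mathbb P^1$). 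So the core of the argument is to show that the set of $E$ for which $G_E$ \emph{fails} one of these conditions is discrete.

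First I would dispose of noncompactness. Using the two elements $(\ell_1,B_1),(\ell_2,B_2)\in\supp\wti\mu$ from Hypothesis~\ref{hyp2.1}, consider $\cM^E(\ell_1,B_1)$ and $\cM^E(\ell_2,B_2)$; their entries are real-analytic in $E$ on $(0,\infty)$ and on $(-\infty,0)$ (using $\sqrt E$ and, for $E<0$, the analytic continuation to hyperbolic functions). The condition ``both generators lie in a common conjugate of $\mathrm{SO}(2)$'' forces, in particular, $|\tr \cM^E(\ell_j,B_j)| \le 2$ together with algebraic relations among the entries; each such relation is the vanishing of a real-analytic function of $E$ which, by \eqref{2.3b} (the two data points genuinely differ, and at least one $B_j\neq I_2$), is not identically zero. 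Hence the bad set for noncompactness is the zero set of a nontrivial analytic function, which is discrete on each of $(0,\infty)$, $(-\infty,0)$; I would need a short separate argument (or direct inspection of the monodromy matrix in \eqref{2.2}) to handle a neighborhood of $E=0$ and to see that these discrete sets do not accumulate at $0$ from either side except possibly at $0$ itself, which we then simply include in $\mathfrak D$.

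Next, the invariant-measure (reducibility) obstruction. If $G_E$ preserves a finite set $\{[\mathbf v_1],\dots,[\mathbf v_k]\}\subset\R\mathbb P^1$ with $k\le 2$, then in particular each generator $\cM^E(\ell,B)$ permutes this set, so some power of each generator has $\mathbf v_1,\dots,\mathbf v_k$ as eigenvectors. Writing out ``$\cM^E(\ell_1,B_1)$ and $\cM^E(\ell_2,B_2)$ have a common eigendirection, or a common pair of swapped eigendirections'' again yields polynomial equations in the matrix entries — for the common-eigenvector case, the resultant-type condition that the two matrices commute up to the relevant structure, which after clearing denominators is the vanishing of a real-analytic function $\Phi(E)$. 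The crucial point is to check $\Phi\not\equiv 0$: here I would exploit the explicit form \eqref{2.2}, where the rotation-angle $\sqrt E\,\ell$ depends on $\ell$, so that two distinct lengths $\ell_1\neq\ell_2$ (or, when $\ell_1=\ell_2$, two distinct $B_1,B_2$) cannot be simultaneously diagonalized in a fixed basis for all $E$, e.g.\ by examining the asymptotics or a convenient special value of $E$. This non-degeneracy verification is the main obstacle: one must rule out the genuinely reducible situations, which are exactly the periodic/free cases excluded by Hypothesis~\ref{hyp2.1}, and translate that exclusion into the statement that the relevant analytic function is not the zero function. Granting that, the bad set is again discrete away from $E=0$.

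Finally I would assemble the pieces: let $\mathfrak D$ be the union of $\{0\}$, the discrete zero sets arising from the noncompactness obstruction, and those from the reducibility obstruction, intersected with any energy below the bottom of the spectrum if convenient. On $\R\setminus\mathfrak D$ both Fürstenberg hypotheses hold, so $L(E)>0$. The only remaining subtlety is that a priori the zero sets, though locally finite on $(0,\infty)$ and $(-\infty,0)$, could accumulate at $0$; since $0$ is already in $\mathfrak D$ this is harmless, and discreteness of $\mathfrak D$ in $\R$ as a whole follows. This completes the plan.
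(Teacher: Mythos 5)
Your skeleton is in fact the same route the paper takes: analyticity of $E\mapsto \cM^E(\ell,B)$ plus a F\"urstenberg-type criterion for analytic families, with the exceptional energies controlled as zeros of a nontrivial analytic function. The paper packages the F\"urstenberg part into \cite[Theorem 2.1]{BuDaFi2}, whose hypotheses are exactly: analyticity, non-constant traces, reality of $z$ when the traces lie in $[-2,2]$, and non-commutation of $\cM^{z_0}(\ell_1,B_1)$ and $\cM^{z_0}(\ell_2,B_2)$ at a \emph{single} $z_0\in\C$. The genuine gap in your proposal is that you never carry out the one step where all the work lies: showing that the relevant obstruction is not identically zero in $E$. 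You write ``granting that'' and suggest ``examining the asymptotics or a convenient special value of $E$,'' but this verification is precisely Theorem~\ref{t:main}, whose proof occupies most of Section~2: after the Iwasawa decomposition $B_j=\pm R(t_j)D(b_j)S(q_j)$ one needs the almost-periodicity Lemma~\ref{lem:triglim}, linear independence of $\{\cos^2 z,\sin^2 z, z\sin 2z, z^{-1}\sin 2z\}$, and a case analysis in which the $(1,1)$ entry of the commutator alone does not suffice — e.g.\ when $\ell_1\neq\ell_2$, $t_1=t_2=\tfrac{\pi}{2}$, $b_1=b_2=b$, $q_1=q_2=0$, the contradiction only comes from the $(2,1)$ entry, which equals $(b^2-w^2)\sin\big(w(\ell_1-\ell_2)\big)$. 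Moreover, there genuinely are degenerate pairs ($\ell_1=\ell_2$ with $B_1=\pm B_2$, or $B_1,B_2\in\{\pm I_2\}$) for which the commutator vanishes for \emph{all} $E$, so no soft asymptotic or single-point check can bypass the classification; one must prove these are the only degenerate pairs and that Hypothesis~\ref{hyp2.1} excludes them. Without this, your argument establishes nothing beyond the statement of the strategy.

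Two secondary inaccuracies in the way you propose to implement F\"urstenberg directly. First, failure of strong irreducibility is not a commutator condition: two non-commuting matrices can share an eigendirection (two distinct unipotent upper-triangular matrices, say), so the ``resultant-type condition that the two matrices commute up to the relevant structure'' must be replaced by genuine common-invariant-direction and invariant-pair conditions, each needing its own non-degeneracy argument of the same difficulty as above. Second, compactness of the closed group generated by two elliptic elements is not decided by trace bounds and entrywise algebraic relations of the generators alone (two elliptic matrices can generate a non-compact group); one would have to argue, e.g., via a common fixed point in the hyperbolic plane. Finally, your concern about $E\le 0$ and accumulation of zeros at $E=0$ is unnecessary: as noted right after \eqref{2.2}, the entries of $\cM^E(\ell,B)$ are entire functions of $E$, so discreteness of the zero set holds on all of $\R$ once the function is shown not to vanish identically.
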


	To begin, we address the key technical fact that will be utilized in the proof of Theorem~\ref{t:posExps}. Given $(\ell_j,I_2,B_j)\in \mathscr A$, $j=1,2$, and $E \in \C$, define
	\[
	G(E)
	= G((\ell_1,B_1),(\ell_2,B_2),E)
	= \left[\mathcal M^E(\ell_1,B_1) , \mathcal M^E(\ell_2,B_2)\right]
	\]
	to be the commutator of the two matrices $\mathcal  M^E(\ell_j,B_j)$, $j=1,2$. In view of \cite{BuDaFi2}, the key obstruction to positive exponents away from a discrete set of energies is everywhere vanishing of $G$.

	\begin{theorem} \label{t:main}
	
	Given $(\ell_j,I_2,B_j) \in \mathscr A$, one has $G((\ell_1,B_1),(\ell_2,B_2),E) = 0$ for all $E \in \C$ if and only if at least one of the following statements is true:
		\begin{align}
	  &\ell_1  =\ell_2 \text{ and } B_1 \in\{B_2, -B_2\}\\
	&\{B_1, B_2\} \subset \{I_2, -I_2\}.
		\end{align}
	\end{theorem}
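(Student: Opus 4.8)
The plan is to analyze the commutator $G(E) = [\mathcal M^E(\ell_1,B_1),\mathcal M^E(\ell_2,B_2)]$ directly, exploiting that $\mathcal M^E(\ell,B) = B R_E(\ell)$ where $R_E(\ell)$ is the free monodromy matrix appearing in \eqref{2.2}. The ``if'' direction is immediate: if $\ell_1 = \ell_2$ and $B_1 = \pm B_2$, then the two matrices are $\pm$ each other and commute; if $B_1, B_2 \in \{I_2,-I_2\}$, then each $\mathcal M^E(\ell_j,B_j) = \pm R_E(\ell_j)$, and since $R_E(\ell_1)$ and $R_E(\ell_2)$ are both polynomials in the single matrix $\left[\begin{smallmatrix} 0 & 1 \\ -E & 0\end{smallmatrix}\right]$ (they are $\exp$ of commuting generators), they commute. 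So the substance is the ``only if'' direction.

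For the forward direction, I would first reduce to a more tractable identity. Write $A_j := B_j$, so $G(E) = A_1 R_E(\ell_1) A_2 R_E(\ell_2) - A_2 R_E(\ell_2) A_1 R_E(\ell_1)$. Two matrices $X,Y \in \SL(2,\R)$ commute if and only if $\operatorname{tr}(XY) = \operatorname{tr}(X)\operatorname{tr}(Y) - 2$ (equivalently $XY - YX = 0$ iff $\{X,Y\}$ have a common eigenvector, detectable via the trace of the commutator-like quantity); more concretely, for $\SL(2)$ matrices $X,Y$ one has $XY = YX$ iff either one of them is $\pm I$ or they share both eigenvectors. The cleanest route is: $G(E)\equiv 0$ forces $\mathcal M^E(\ell_1,B_1)$ and $\mathcal M^E(\ell_2,B_2)$ to commute for every $E$, hence (for each fixed $E$ at which neither is $\pm I$) to be simultaneously diagonalizable over $\C$. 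I would extract the constraint by examining the small-$E$ expansion: since $R_E(\ell) = I + \ell\left[\begin{smallmatrix}0&1\\0&0\end{smallmatrix}\right] + O(E)$ near $E=0$, expanding $G(E)$ in powers of $\sqrt E$ (or directly in $E$, using that all entries are entire in $E$) yields a hierarchy of matrix equations. The leading term gives $[B_1 N(\ell_1), B_2 N(\ell_2)] = 0$ where $N(\ell) = I + \ell\left[\begin{smallmatrix}0&1\\0&0\end{smallmatrix}\right]$, and matching successive coefficients of the Taylor series of $\cos\sqrt E\,\ell$, $\tfrac{\sin\sqrt E\,\ell}{\sqrt E}$, $-\sqrt E\sin\sqrt E\,\ell$ should pin down $\ell_1 = \ell_2$ and then $B_1 = \pm B_2$, unless the $B_j$ are scalar. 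The alternative, perhaps cleaner, approach: since $G(E)\equiv 0$ in $E$, differentiate at $E = 0$. Using $\frac{d}{dE}R_E(\ell)\big|_{E=0} = \left[\begin{smallmatrix} -\ell^2/2 & -\ell^3/6 \\ -\ell & -\ell^2/2 \end{smallmatrix}\right]$ together with the $E=0$ relation $[B_1 N(\ell_1), B_2 N(\ell_2)] = 0$, one gets a second independent relation; the pair of relations is what forces the dichotomy.

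I would organize the forward direction as a short case analysis. \emph{Case 1: both $B_j$ are scalar multiples of $I_2$.} Since $B_j \in \SL(2,\R)$, scalar means $B_j = \pm I_2$, landing in the second alternative. \emph{Case 2: at least one $B_j$, say $B_1$, is not scalar.} Then $B_1 N(\ell_1)$ is a non-scalar $\SL(2,\R)$ matrix (as $N(\ell_1)$ is invertible), so its centralizer in $M_2(\C)$ is the $2$-dimensional algebra $\C[B_1 N(\ell_1)]$. The $E=0$ relation says $B_2 N(\ell_2) \in \C[B_1 N(\ell_1)]$, and the derivative relation at $E=0$ says that a certain second matrix built from $B_2$ and $\ell_2$ also lies in this commutative algebra together with an analogous one from $B_1,\ell_1$; intersecting these constraints should force $\ell_2 = \ell_1$, after which $B_2 N(\ell_1) = \pm B_1 N(\ell_1)$ would give $B_2 = \pm B_1$. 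The main obstacle I anticipate is precisely this last bookkeeping step: showing that the polynomial/analytic constraints coming from all orders of the $E$-expansion do not admit a spurious solution with $\ell_1 \neq \ell_2$ and $B_1,B_2$ non-scalar — one must rule out ``accidental'' simultaneous diagonalizability persisting for all $E$. I expect this is handled by noting that $\{R_E(\ell) : E \in \R\}$ generates (as $\ell$ varies too) enough of $\SL(2,\R)$ that a common eigenvector for $B_1 R_E(\ell_1)$ and $B_2 R_E(\ell_2)$ for \emph{all} $E$ would have to be a common eigenvector of $B_1, B_2$ and of all $R_E(\cdot)$, which is impossible since the $R_E(\ell)$ have no common eigenvector as $E$ ranges over $\R$ (their eigenvectors rotate with $E$). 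That observation, applied carefully, collapses the non-scalar case to $\ell_1=\ell_2$, $B_1 = \pm B_2$.
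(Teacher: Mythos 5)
Your ``if'' direction is fine, and the opening reduction (analyticity in $E$, so all Taylor coefficients of $G$ vanish) is legitimate, but the ``only if'' direction is not a proof: at exactly the point where the real work happens you defer (``should pin down $\ell_1=\ell_2$'', ``I expect this is handled by noting\dots''), and the heuristic you offer for closing the gap is unsound. You argue that if $B_1R_E(\ell_1)$ and $B_2R_E(\ell_2)$ commute for all $E$, a common eigenvector would have to be a common eigenvector of $B_1$, $B_2$ and of all the free monodromies $R_E(\ell)$. That does not follow: for each $E$ the two products share an eigendirection, but that direction may depend on $E$, and nothing forces it to be an eigenvector of the factors separately. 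Indeed, in the \emph{admissible} case $\ell_1=\ell_2$, $B_1=B_2=B$ non-scalar, the common eigenvector of $BR_E(\ell)$ is in general an eigenvector of neither $B$ nor $R_E(\ell)$; so your proposed mechanism would ``rule out'' the legitimate commuting case as well, which shows it carries no information distinguishing $\ell_1\neq\ell_2$ from $\ell_1=\ell_2$.

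The danger you yourself flag (``accidental'' solutions of the constraint hierarchy) is real, and this is precisely why the paper's proof is a genuine computation rather than a soft centralizer argument: it writes $B_j=\pm R(t_j)D(b_j)S(q_j)$ via the Iwasawa decomposition, computes the entries of $G$ explicitly, and uses large-$w$ asymptotics (boundedness of lower-order terms in $1/w$), an almost-periodicity lemma for expressions of the form $P\cos(Sw)\sin(Rw)+Q\cos(Rw)\sin(Sw)$, and linear independence of $\cos^2 z$, $\sin^2 z$, $z\sin 2z$, $z^{-1}\sin 2z$. Notably, in the case $\ell_1\neq\ell_2$, $t_1=t_2\neq 0$, the condition $G_{1,1}\equiv 0$ \emph{does} admit the spurious family $t_1=t_2=\pi/2$, $b_1=b_2$, $q_1=q_2=0$ (i.e.\ $B_1=B_2$ antidiagonal) with $\ell_1\neq\ell_2$, and it is only the $(2,1)$ entry, which reduces to $(b^2-w^2)\sin(w(\ell_1-\ell_2))\equiv 0$, that eliminates it. So the ``bookkeeping step'' you hope to dispatch by matching a couple of low-order Taylor coefficients at $E=0$ and intersecting commutants is exactly where spurious solutions survive; without carrying out an analysis of comparable completeness (all relevant entries, all parameter cases), the forward implication remains unproven.
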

	The following lemma will be helpful in the proof of Theorem~\ref{t:main}.
	
	\begin{lemma} \label{lem:triglim}
		If $P,Q,R,S\in\R$ with $R>0,S>0$ and
		\begin{equation} \label{eq:lem:triglim}
		\lim_{w\to\infty} P \cos(S w) \sin(R w) + Q\cos(R w) \sin(S w)=0,
		\end{equation}
		then either $P=Q=0$ or $R=S$ {\rm(}and $P+Q=0${\rm)}. In particular, if \eqref{eq:lem:triglim} holds true, then
		\[
		P\cos(Sw)\sin(Rw) + Q\cos(Rw)\sin(Sw) = 0 \text{ for all } w.
		\]
	\end{lemma}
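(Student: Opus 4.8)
\medskip

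The plan is to diagonalize the trigonometric expression via product-to-sum identities and then isolate its coefficients by Cesàro averaging. Applying $\cos A\sin B = \tfrac12[\sin(A+B)-\sin(A-B)]$ to each term, rewrite
\[
P\cos(Sw)\sin(Rw) + Q\cos(Rw)\sin(Sw) = a\sin(\alpha w) + b\sin(\beta w),
\]
where $a = \tfrac{P+Q}{2}$, $b = \tfrac{P-Q}{2}$, $\alpha = R+S$, and $\beta = R-S$. Thus \eqref{eq:lem:triglim} says precisely that $f(w) := a\sin(\alpha w) + b\sin(\beta w) \to 0$ as $w\to\infty$. Note that $R,S>0$ gives $\alpha = R+S>0$ together with $\alpha + \beta = 2R \ne 0$ and $\alpha - \beta = 2S \ne 0$, i.e.\ the frequencies are non-resonant.

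I would then split into two cases. Suppose first $R \ne S$, so $\beta \ne 0$. Since $f(w)\to 0$, the averages $\tfrac1T\int_0^T f(w)\sin(\alpha w)\,dw$ and $\tfrac1T\int_0^T f(w)\sin(\beta w)\,dw$ both tend to $0$ as $T\to\infty$. On the other hand $\tfrac1T\int_0^T \sin^2(\alpha w)\,dw \to \tfrac12$ and $\tfrac1T\int_0^T\sin^2(\beta w)\,dw\to\tfrac12$ (using $\alpha,\beta\ne 0$), while $\tfrac1T\int_0^T\sin(\alpha w)\sin(\beta w)\,dw\to 0$ because $\sin(\alpha w)\sin(\beta w)$ has a bounded primitive (using $\alpha\pm\beta\ne 0$). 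Combining, $a/2=0$ and $b/2=0$, hence $P+Q = P-Q = 0$, that is $P=Q=0$. Suppose instead $R=S$; then $\beta=0$ and $f(w)=a\sin(2Rw)$ with $2R>0$. A function $w\mapsto a\sin(2Rw)$ tends to $0$ at infinity only if $a=0$, i.e.\ $P+Q=0$. This establishes the stated dichotomy.

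For the final ``in particular'' clause: in the first case $P=Q=0$, so $f\equiv 0$; in the second case $f(w)=\tfrac{P+Q}{2}\sin(2Rw)=0$ for all $w$ since $P+Q=0$. The argument is short and presents no genuine obstacle; the only points demanding a little care are verifying the non-resonance relations $\alpha\pm\beta\neq 0$ and treating the degenerate frequency $\beta=0$ (equivalently $R=S$) separately, so that the averaging step isolates $a$ and $b$ correctly.

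\medskip
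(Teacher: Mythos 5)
Your proof is correct, and it takes a genuinely different route from the paper. You rewrite the expression via product-to-sum identities as $\tfrac{P+Q}{2}\sin((R+S)w)+\tfrac{P-Q}{2}\sin((R-S)w)$ and then isolate the two coefficients by Ces\`aro averaging against $\sin((R+S)w)$ and $\sin((R-S)w)$, using the non-resonance relations $(R+S)\pm(R-S)=2R,\,2S\neq 0$ and treating the degenerate case $R=S$ (frequency $R-S=0$) separately; the hypothesis that the expression tends to $0$ enters directly through the averages, which kill the limits and force the coefficients to vanish. The paper instead observes that the expression is almost periodic, so decay at infinity upgrades immediately to identical vanishing (via the $\delta/2$-almost-period argument), and then extracts the parameter constraints by evaluating the identically vanishing function at the specific points $w=\pi/S$ and $w=\pi/R$, yielding $R/S,\,S/R\in\Z$, hence $R=S$ and $P+Q=0$. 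Your argument is more self-contained in that it never invokes almost periodicity, at the cost of a slightly longer computation; the paper's is shorter once that one structural fact is in hand, and it also delivers the ``in particular'' clause for free (yours handles it correctly by inspection in each case). Both establish exactly the stated dichotomy.
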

	\begin{proof}
		Since $P \cos(S w) \sin(R w) + Q\cos(R w) \sin(S w) =:f(w)$ is an almost-periodic function of $w$, if \eqref{eq:lem:triglim} holds, then $f$ vanishes identically. To see this, note that if $|f(y)| = \delta>0$ for some $y \in \R$ and $p$ is a $\delta/2$-almost period of $f$, then $|f(y+kp)| \ge \delta/2$ for all $k \in \Z$.
		
		Thus, we assume $f \equiv 0$ and that $P,Q \neq 0$. Since $f(\pi/S) = 0$, we arrive at
		\[
		P \sin(\pi R/S) = 0.
		\]
		Since $P \neq 0$, this yields $\sin(\pi R/S) = 0$, hence $R/S \in \Z$. Interchanging the roles of $R$ and $S$ implies $S/R \in \Z$ as well. Since $S,R>0$, this forces $R=S$ and hence $P+Q=0$ as well.
	\end{proof}
	
	With Lemma~\ref{lem:triglim} in hand, we prove Theorem~\ref{t:main}.
	
	\begin{proof}[Proof of Theorem~\ref{t:main}]

	Given $t\in[0,\pi)$, $b>0$, $\ell>0$, $q\in\R$, and $E\in\C$, introduce $w = \sqrt{E}$, and define the matrices
	\begin{equation}\lb{2.3}
		R(t) = \begin{bmatrix}\cos(t)&-\sin(t)\\\sin(t)&\cos(t)\end{bmatrix},
	\quad
	D(b) = \begin{bmatrix}b&0\\0&1/b\end{bmatrix},
	\quad
	S(q)
	=
	\begin{bmatrix}1&0\\q&1\end{bmatrix},
	\end{equation}
	and
	\begin{equation}\lb{2.4}
	T(w,\ell)=\begin{bmatrix}\cos(\ell w)&\sin(\ell w)/w\\-w\sin(\ell w)&\cos(\ell w)\end{bmatrix}.
	\end{equation}
	Consider the matrix given by
	\begin{equation}\lb{2.5}
\widetilde M(t,b,\ell,q,w)
=
R(t)D(b)S(q)T(w,\ell).
	\end{equation}
For brevity, we introduce
	\[
	\mathcal{A} = [0,\pi) \times (0,\infty) \times (0,\infty) \times \R
	\]
	for the parameter space; given $\alpha = (t,b,\ell,q) \in \mathcal A$, we abuse notation a bit and write $\widetilde M(\alpha,w)$ for $\widetilde M(t,b,\ell,q,w)$. Since $B_j \in \SL(2,\R)$, the Iwasawa decomposition (see, e.g.\ \cite{LangSL2R}) implies there exist $t_j$, $b_j$, and $q_j$ such that $\alpha_j := (t_j,b_j,\ell_j,q_j) \in \mathcal A$ and $B_j =\pm R(t_j) D(b_j) S(q_j)$, and hence $\widetilde M(\alpha_j,w) = \pm M^E(\ell_j,B_j)$. It is enough to assume the representation $B_j =R(t_j) D(b_j) S(q_j)$ and prove that one of
	\begin{align}
	\label{eq:Gequiv0:1}      \ell_1 & =\ell_2 \text{ and } B_1 = B_2,\\
	\label{eq:Gequiv0:2}   B_1 & = B_2 = I_2
	\end{align}
holds.

		If either \eqref{eq:Gequiv0:1} or \eqref{eq:Gequiv0:2} holds, then a straightforward calculation reveals that $G(E) \equiv 0$. Conversely, assume that $G\equiv 0$; in particular, $G_{1,1}\equiv 0$.
		
		\begin{case} {\boldmath $\ell_1 = \ell_2$\textbf{.}}  In this case, we denote $\ell := \ell_1=\ell_2$. \end{case}
		
		\begin{subcase}{\boldmath $t_1,t_2\neq 0$\textbf{.}} \end{subcase}
		
		Since $\sin t_1,\sin t_2\neq 0$, after making the substitutions $x_j = \cot(t_j)$ and $z = \ell w$, we obtain
		\[
		0
		\equiv \frac{b_1b_2}{\sin(t_1)\sin(t_2)}G_{1,1}
		= A\cos^2z+B\sin^2z+Cz\cos z\sin z+D\frac{\cos z\sin z}z,\]
		where
		\begin{align*}
		A &= b_1^2-b_2^2+q_1x_1-q_2 x_2\\
		B &= q_1 x_2-q_2 x_1 +  (b_2^2 - b_1^2) x_1x_2\\
		C &= \frac{x_2-x_1}{\ell}\\
		D&= \ell\Big( \left(b_1^2b_2^2 +q_1q_2\right) (x_1-x_2) + \left(b_1^2 q_2 - b_2^2 q_1\right)(1+x_1x_2) \Big).
		\end{align*}
		Since the set of functions $\{\cos^2 z, \sin^2 z, z \sin(2z), z^{-1} \sin(2z)\}$ is linearly independent over $\C$, we have $A = B = C = D = 0$, and thus we arrive at
		\begin{align}
		0 & = b_1^2-b_2^2+q_1x_1-q_2x_2 \label{eq:Gequiv0:A=0} \\
		0 & =-q_2 x_1 + q_1 x_2 - b_1^2 x_1 x_2 + b_2^2 x_1 x_2 \label{eq:Gequiv0:B=0}\\
		0 & =\frac{-x_1+x_2}{\ell} \label{eq:Gequiv0:C=0} \\
		0 & = \ell\Big( \left(b_1^2b_2^2 +q_1q_2\right) (x_1-x_2) + \left(b_1^2 q_2 - b_2^2 q_1\right)(1+x_1x_2) \Big) \label{eq:Gequiv0:D11=0}
		\end{align}
		From \eqref{eq:Gequiv0:C=0}, we obtain $x_1 = x_2 =:x$, which implies $t_1=t_2 =:t$, since $t_j \in (0,\pi)$. Plugging this into \eqref{eq:Gequiv0:A=0} and \eqref{eq:Gequiv0:B=0}, we have that $0 = b_1^2-b_2^2+x(q_1-q_2)$ and $0 = x(q_1-q_2)-x^2(b_1^2-b_2^2)$. Subtracting these two equations gives $0 = b_1^2-b_2^2+x^2(b_1^2-b_2^2) = (b_1^2-b_2^2)(1+x^2)$. Since $1+x^2>0$, we obtain that $b_1^2 = b_2^2$, and hence $b_1 = b_2 =:b$ since $b_1,b_2>0$.  Plugging these relations into \eqref{eq:Gequiv0:D11=0} and using $\ell>0$, we obtain
		\begin{align*}
		0 = b^2(q_2-q_1)(1+x^2),
		\end{align*}
		implying  $q_1 = q_2$. Thus, we have shown that $(t_1,b_1,q_1) = (t_2,b_2,q_2)$, as desired.
		
		\smallskip
		\begin{subcase}\textbf{\boldmath Exactly one $t_j$ vanishes.} \end{subcase}
		Without loss of generality, assume $t_1=0$ and $t_2\neq 0$; in particular $\sin t_1 = 0$ and $\sin t_2\neq 0$. Since the assumption $t_1 = 0$, $t_2 \neq 0$ implies $B_1 \neq B_2$ and $B_2 \neq I_2$, we aim to show that $G\equiv 0$ is impossible in this case. Using the same substitutions as before, we get
		\[\frac{b_1b_2}{\sin t_2}G_{1,1}
		=
		A\cos^2z+B\sin^2z+Cz\cos z\sin z+D\frac{\cos z\sin z}z,
		\]
		where
		\begin{align*}
		A&=q_1\\
		B&=-(q_2 + (b_1^2 - b_2^2) x_2)\\
		C&=-1/\ell\\
		D&= \ell\left(b_1^2 b_2^2 + q_1 q_2 + (b_1^2 q_2 - b_2^2 q_1) x_2\right)
		\end{align*}
		Since $C = 0$ is clearly impossible, we see that $G_{1,1}$ cannot vanish identically in this case.
		
		\begin{subcase}{\boldmath $ t_1 = t_2 = 0$\textbf{.}}\end{subcase}
		Substituting $w = z/l$, we get
		\[b_1b_2G_{1,1} =  B\sin^2z+D\frac{\cos z\sin z}z,\]
		where $B = b_2^2-b_1^2$ and $D = \ell(b_1^2 q_2 - b_2^2 q_1)$. Thus, if $G_{1,1} \equiv 0$, we obtain $B=D=0$, implying $b_1=b_2$ and $q_1=q_2$ as before.
		
		\begin{case} $\ell_1 \neq \ell_2$ \end{case}

		\begin{subcase}{\boldmath $t_1,t_2\neq 0$\textbf{.}}\end{subcase}
		
		Substituting  $x_j = \cot(t_j)$, we have
		\begin{equation} \label{eq:l1neql2:1}
		\frac{b_1b_2}{w \sin t_1 \sin t_2} G_{1,1}(w)
		=
		A(w)+\frac{1}{w}B(w)+\frac{1}{w^2}C(w)\equiv0
		\end{equation}
		where
		\begin{align*}
		A(w)
		&= x_2 \cos(\ell_1 w) \sin(\ell_2 w)-x_1 \sin(\ell_1 w) \cos(\ell_2 w)\\
		B(w)
		&= \sin(\ell_1 w) \sin(\ell_2 w) (x_2(q_1-b_1^2 x_1)-x_1(q_2 - b_2^2 x_2)) \\ &\qquad +\cos(\ell_1 w) \cos(\ell_2 w) (q_1x_1-q_2x_2+b_1^2-b_2^2)\\
		C(w)&=(b_1^2+q_1 x_1) (q_2-b_2^2 x_2) \cos(\ell_1 w) \sin(\ell_2 w) \\ & \qquad -(q_1-b_1^2 x_1) (b_2^2+q_2 x_2) \sin(\ell_1 w) \cos(\ell_2 w).
		\end{align*}
		Since $B(w)$ and $C(w)$ are bounded functions, \eqref{eq:l1neql2:1} implies $\lim_{w\to\infty}A(w)=0$. Applying Lemma~\ref{lem:triglim} and recalling that $\ell_1\neq\ell_2$, we must have $A(w) \equiv 0$, $x_1=x_2 = 0$, and hence $t_1 = t_2=\frac{\pi}{2}$. Thus, appealing to boundedness of $C(w)$ as before and using \eqref{eq:l1neql2:1} again, we arrive at $B(w) \equiv 0$. Since $x_1=x_2=0$, this implies
		\[
		(b_1^2-b_2^2) \cos(\ell_1 w) \cos(\ell_2 w) \equiv 0,
		\]
		so $b_1=b_2=:b$ (since both are positive). Since $A$ and $B$ vanish identically, appealing to \eqref{eq:l1neql2:1} one more time gives us $C(w)\equiv0$. Consequently,
\[
		q_2 \cos(\ell_1 w) \sin(\ell_2 w)- q_1 \sin(\ell_1 w) \cos(\ell_2 w) \equiv 0.
\]
		From Lemma~\ref{lem:triglim} (using $\ell_1 \neq \ell_2$ again), we have $q_1=q_2=0$.
		
		At this point, we have $t_1=t_2=\frac{\pi}{2}$, $b_1=b_2$, $q_1=q_2=0$. Substituting all of this into the relation $G_{2,1}\equiv 0$, we see that the expression $(b^2-w^2) \sin(w (\ell_1 - \ell_2))$ vanishes for all $w$, which contradicts $\ell_1\neq\ell_2$.
		
		\begin{subcase}\textbf{\boldmath Exactly one $t_j$ vanishes.}\end{subcase}
		Substituting $t_1=0$ and $x_2=\cot t_2$, we get
		\begin{equation}
		\frac{b_1b_2}{w\sin t_2}G_{1,1}
		= D(w)+\frac{1}{w}E(w)+\frac{1}{w^2}F(w),
		\end{equation}
		where
		\begin{align*}
		D(w)&=-\sin(\ell_1 w) \cos(\ell_2 w)\\
		E(w)&=q_1 \cos(\ell_1 w) \cos(\ell_2 w)-\sin(\ell_1 w) \sin(\ell_2 w) (x_2 (b_1^2-b_2^2)+q_2)\\
		F(w)
		&= b_1^2 (b_2^2+q_2 x_2) \sin(\ell_1 w) \cos(\ell_2 w)+q_1 (q_2-b_2^2 x_2) \cos(\ell_1 w) \sin(\ell_2 w).
		\end{align*}
		As before, if $G_{1,1}$ vanishes identically, then, since the functions  $E(w)$ and $F(w)$ are bounded, we get $\lim_{w\to\infty}D(w)=0$, hence $D$ vanishes identically (by Lemma~\ref{lem:triglim}), a contradiction.
		
		\begin{subcase} \textbf{\boldmath $t_1=t_2=0$.}\end{subcase}
		Substituting $t_1=t_2=0$, we get
		\begin{equation}
		b_1 b_2 G_{1,1}=  J(w)+\frac{1}{w}K(w)\equiv0
		\end{equation}
		where
		\begin{align*}
		J(w)
		&=(b_2^2-b_1^2) \sin(\ell_1 w) \sin(\ell_2 w)\\
		K(w)
		&=b_1^2 q_2 \sin(\ell_1 w) \cos(\ell_2 w)-b_2^2 q_1 \cos(\ell_1 w) \sin(\ell_2 w).
		\end{align*}
		As before, $K(w)$ is a bounded function, so $\lim_{w\to\infty}J(w)=0$. Arguing as in previous cases, we have $b_1 = b_2 =:b$ and $J(w)\equiv0$. Consquently, $K(w)\equiv0$, so Lemma~\ref{lem:triglim} yields $q_1=q_2=0$.
		
		Substituting $b=b_1=b_2$, $t_1=t_2=0$, and $q_1=q_2=0$ into $G_{2,1}\equiv 0$, we arrive at
		\[(1-b^2) w \sin(w (\ell_1-\ell_2))\equiv0 \]
		Since $\ell_1\neq \ell_2$,  we must have $b_1=b_2=1$. Therefore $(t_1,b_1,q_1) = (t_2,b_2,q_2) = (0,1,0)$, which implies $B_1=B_2=I_2$, just like we wanted.
	\end{proof}
	
	We are now in a position to prove the Theorem~\ref{t:posExps}.
	
	\begin{proof}[Proof of Theorem~\ref{t:posExps}]
	 	It is enough to check the conditions of \cite[Theorem 2.1]{BuDaFi2} with $A(z):=\cM^{z}(\ell_1, B_1)$ and $B(z):=\cM^{z}(\ell_2, B_2)$. First, both functions are real analytic,  $\tr A(z), \tr B(z)$ are non-constant, and $z\in\bbR$ whenever $\tr A(z), \tr B(z)\in[-2,2]$.  Then we need to show that for some $z_0\in\bbC$ one has
	 	\begin{equation}\lb{2.3a}
	 	[\cM^{z_0}(\ell_1, B_1), \cM^{z_0}(\ell_2, B_2)] \not=0.
	 	\end{equation}
  By Assumption~\eqref{2.3b}, neither \eqref{eq:Gequiv0:1} nor  \eqref{eq:Gequiv0:2} holds and hence Theorem~\ref{t:main} implies the desired result.
	 \end{proof}
	
	\section{Proof of Theorem \ref{thm2.3}}\lb{Section3}

	First, we note that the boundary conditions corresponding to the elements of $\mathscr M_{\sep}$ decouple the operator $H$ and hence lead to localization for somewhat trivial reasons. Specifically, assume that the sequence in \eqref{1.1a} contains a subsequence
	\begin{align}
	\begin{split}\lb{newal}
\{(A_{j_k}, B_{j_k})\}_{k\in\Z}&=\left\{\begin{bmatrix}
x_{j_k}&y_{j_k}\\
0&	0
\end{bmatrix},\begin{bmatrix}
0&0\\
w_{j_k}&	z_{j_k}
\end{bmatrix}\right\}_{k\in\bbZ}\subset{\mathscr M}_{\sep},\\
&j_k\rightarrow\pm \infty,\ k\rightarrow\pm\infty.
	\end{split}
	\end{align}	
	Then
	\begin{equation}
H=\bigoplus_{k\in\bbZ}H_{[j_k, j_{k+1}]},
	\end{equation}
	 where the operator $H_{[m,n]}$ is given by
	\begin{align}
	\begin{split}\lb{2.13}
		&H_{[m,n]}=-d^2/dx^2, H_{[m,n]}:\dom(H_{[m,n]})\subset L^2(t_m, t_n)\rightarrow L^2(t_m, t_n)\\
	&\dom(H_{[m,n]})=\left\{ u\in \hatt H^2(t_{m}, t_{n}): \begin{matrix}
	u\text{\ satisfies \eqref{1.1}\ }\  m< j< n \\
	w_mu(t_{m}^+)+z_mu'(t_{m}^+)=0\\
	x_nu(t_{n}^-)+y_nu'(t_{n}^-)=0
	\end{matrix}\right\}.
	\end{split}
	\end{align}
	Since $H_{[m,n]}$ has compact resolvent, the operator $H$ possesses a basis of compactly supported (hence, exponentially decaying) eigenfunctions and has pure point spectrum.

By Remark \ref{newrem} (ii) it is enough to prove Theorem \ref{thm2.3} (i) assuming \eqref{2.1}, that is, $\supp\wti\mu \cap \mathscr M_{\sep}=\emptyset$ and $\theta=0$ in \eqref{1.1d}. This is accomplished in the following theorem.
	
	\begin{theorem}\lb{main1} Assume Hypothesis \ref{hyp2.1} and recall  $\mathfrak D$ from Theorem \ref{t:posExps}. Then
	there exists a set $\wti \Omega\subset\Omega$ with $\mu(\wti \Omega)=1$ such that for every compact interval $I\subset\bbR\setminus \mathfrak{D}$ and every $\omega\in\wti\Omega$ the following assertions hold:
	\begin{enumerate}
		\item[{\rm(i)}] For every generalized eigenvalue\footnote{A generalized eigenvalue is an energy $E$ admitting a linearly bounded solution, that is, a solution,  $u$, satisfying \eqref{417new}.} $E\in I$ of the operator $H_{\omega}$, one has
		\begin{equation}\lb{a51}
		L(E)=\lim\limits_{n\rightarrow\infty}\frac{1}{n}\log \|M_n^E(\omega)\|=\lim\limits_{n\rightarrow-\infty}\frac{1}{|n|}\log \|M_n^E(\omega)\|.
		\end{equation}
		\item[{\rm(ii)}] The spectral subspace $\ran(\chi_I(H_{\omega}))$ admits a basis of exponentially decaying eigenfunctions.
		\item[{\rm(iii)}] Given $\delta\in(0,1)$ and a normalized eigenfunction
	\begin{equation}
	f\in\ker(H_{\omega}-E), E\in I,\ \|f\|_{L^2(\bbR)}=1,
	\end{equation}
	there exist $\zeta=\zeta(f)\in\bbZ$, $C_{\omega, \delta}>0$, $C_{\delta}>0$ such that
	\begin{equation}\lb{426}
	|f(x^+)|\leq C_{\omega,\delta}e^{C_{\delta}\log^{22}(|\zeta|+1)}e^{-(1-\delta){\overline{L}}(E)|x-\zeta|},\  x\in\bbR.
	\end{equation}
	
	\item[{\rm(iv)}]For every $p>0$ and every compact set $\cK\subset \bbR$ one has
	\begin{equation}
	\sup\limits_{t>0} \left\||X|^p\chi_I(H_{\omega}) e^{-itH_{\omega}}\chi_{\cK}\right\|_{L^2(\bbR)}<\infty.
	\end{equation}
\end{enumerate}
\end{theorem}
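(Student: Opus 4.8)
The plan is to deduce Theorem~\ref{main1} from the localization machinery of \cite{BuDaFi}, based on large deviations for the Lyapunov exponent, together with its continuum refinements \cite{BuDaFi2, DFS}; Theorem~\ref{t:posExps} supplies the one substantive hypothesis. The first (routine) point is that on a compact interval $I \subset \R \setminus \mathfrak D$ the positivity of $L$ is uniform, $\gamma_I := \inf_{E \in I} L(E) > 0$. This follows from compactness of $I$ and continuity of $E \mapsto L(E)$ on $\R \setminus \mathfrak D$, the latter being a consequence of the Furstenberg non-degeneracy underlying Theorem~\ref{t:posExps} (uniqueness of the stationary measure on $\bbP^1$ forces weak-$*$ continuity of the stationary measure in $E$, hence continuity of the Furstenberg integral); here one uses that the fibers $M^E(\omega)$ and their $E$-derivatives are uniformly bounded for $\omega \in \Omega$ and $E$ in a compact set, which is immediate from boundedness of $\mathscr A$ in Hypothesis~\ref{hyp1.2} and the explicit form \eqref{2.2}.

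Next I would establish the uniform large deviation theorem: for each $\e > 0$ there are $C, \eta > 0$ and $n_0 \in \N$ such that
\[
\mu\bigl\{ \omega : |F_n(\omega, E) - L(E)| > \e \bigr\} \le C e^{-\eta n}, \qquad n \ge n_0, \ E \in I.
\]
Because the cocycle $(T, M^E)$ is i.i.d.\ with uniformly bounded fibers and $\gamma_I > 0$, this is a standard consequence of the Furstenberg--Le Page theory in the form used in \cite{BuDaFi, BuDaFi2}; uniformity in $E$ comes from covering $I$ by an exponentially fine net and interpolating via the deterministic Lipschitz bound on $E \mapsto \log \|M_n^E(\omega)\|$. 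Together with the deterministic two-sided bound $\|M_n^E(\omega)\| \le e^{C_I |n|}$, valid for all $\omega$, all $n \in \Z$, and $E \in I$, this furnishes precisely the input required by the abstract scheme.

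The core of the argument, which I would run following \cite{BuDaFi} (and \cite{BuDaFi2, DFS} for the passage to the continuum), is to convert the large deviation theorem into the deterministic statement that there exists $\wti\Omega \subseteq \Omega$ with $\mu(\wti\Omega) = 1$ such that, for every $\omega \in \wti\Omega$ and every generalized eigenvalue $E \in I$ of $H_\omega$, the (essentially unique) polynomially bounded solution of $-u'' = Eu$ obeying the vertex conditions of $H_\omega$ decays exponentially as $x \to \pm\infty$ at rate at least $(1-\delta)\overline{L}(E)$, with a center of localization $\zeta \in \Z$ and sub-exponential prefactor as in \eqref{426}. This gives (iii) at once, and also (i): along $\wti\Omega$ the transfer-matrix norms realize $L(E)$ in both directions at every generalized eigenvalue of $I$, as in \eqref{a51}, a by-product of the large deviation bound and the decay just established. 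I expect this to be the main obstacle. It rests on the elimination of ``double resonances'' --- ruling out, off a $\mu$-null set, that a single energy is simultaneously resonant at two widely separated scales --- which is the Jitomirskaya / Bourgain--Goldstein type argument built on the large deviation estimate, a Borel--Cantelli bound over a net of energies, and the deterministic polynomial (Schnol/Berezanskii) bound on generalized eigenfunctions. The continuum-specific issue, namely controlling $u$ on the intervals $(t_j, t_{j+1})$ between lattice points, is handled by the continuum cocycle $\overline{M}^E_x$ and the identity $L(E) = \overline{\ell}\cdot\overline{L}(E)$, which also accounts for the appearance of $\overline{L}$ and the physical variable $x$ in \eqref{426}.

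Finally, assertions (ii) and (iv) follow by soft arguments. For (ii): by Schnol's theorem the spectral measure of $H_\omega$ restricted to $I$ is carried by generalized eigenvalues, each of which is now a genuine eigenvalue with an exponentially decaying eigenfunction, and the corresponding eigenfunctions span $\ran(\chi_I(H_\omega))$; hence $H_\omega$ restricted to that subspace has pure point spectrum with an exponentially localized eigenbasis. For (iv): the bound \eqref{426} is an estimate of SULE type, so combining it with the standard fact that only finitely many eigenvalues in $I$ (with controlled multiplicity) have center of localization near any given site, summing \eqref{426} over the eigenbasis and cutting off by $\chi_{\cK}$ yields $\sup_{t > 0} \|\, |X|^p \chi_I(H_\omega) e^{-itH_\omega} \chi_{\cK}\|_{L^2(\R)} < \infty$, exactly as in \cite{BuDaFi, DFS}. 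Intersecting the full-measure sets produced at the successive stages gives the desired $\wti\Omega$.
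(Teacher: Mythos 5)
Your proposal follows essentially the same route as the paper: uniform positivity and a large deviation theorem on compact $I\subset\R\setminus\mathfrak D$ (obtained from Theorem~\ref{t:posExps} via the machinery of \cite{BuDaFi,BuDaFi2}), elimination of double resonances to force exponential decay of generalized eigenfunctions with the SULE-type prefactor, a Schnol-type argument for (ii), and the Germinet--De Bi\`evre/SULE argument of \cite{GdB,DFS} for (iv). The paper additionally carries out the continuum adaptation in detail (the $\psi_\pm$/Green-function representation between vertices, the resolvent lower bound for the Neumann restrictions, and the Avalanche Principle step), which your sketch defers to the cited works, but the architecture and the key inputs are the same.
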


\begin{proof} Our argument closely follows the proof of \cite[Theorem 3.11]{DFS} which in turn stems from that of \cite[Theorem 1.2]{BuDaFi}. Throughout the the rest of the proof, $f\lesssim g$ denotes $f\leq C(\mathscr A, I)g$ with some constant $C(\mathscr A, I)>0$ depending only on $\mathscr A$ and $I$.

{\it Proof of  Part {\rm(i)}}.	As was discussed in \cite[Section 3.3]{DFS}, Theorem 2.1 yields a Large Deviation Theorem, \cite[Theorem 3.1]{BuDaFi}, which in turn implies the following two facts:\newline
 $\bullet$ For every $0<\varepsilon<1$, there exists a full-measure set $\Omega_1(\varepsilon) \subseteq \Omega$ with the following property: For every $\omega \in \Omega_1(\varepsilon)$, there is $n_1 = n_1(\omega,\varepsilon)$ such that
\begin{equation}\lb{413new}
\left| L(E) - \frac{1}{n^2} \sum_{s=0}^{n^2-1}\frac{ \log\|M^E_n(T^{\zeta+sn}\omega)\|}{n} \right|
<
\varepsilon
\end{equation}
for all  $E \in I$, $\zeta\in\bbZ$, and $n \geq \max ( \log^{\frac23}(|\zeta|+1),n_1)$, see \cite[Proposition 5.2]{BuDaFi}.
\newline
$\bullet$ For every $0<\varepsilon<1$, there exists a full measure set $\Omega_{2}(\varepsilon)$ such that for every $\omega\in\Omega_2(\varepsilon)$, there is $n_2=n_2(\varepsilon,\omega)$ such that
\begin{equation}\lb{45new}
\frac{1}{n}\log \|M_n^{E}(T^{\zeta_0}\omega)\|\leq L(E)+\varepsilon
\end{equation}
for any $\zeta_0\in\bbZ$ and $n\geq \max( \log ^2(|\zeta_0|+1),n_2)$, see \cite[Corollary~5.3]{BuDaFi}. In particular, this yields
\begin{equation}\lb{46new}
\mu\set{\omega: \text{for all } E\in I,\  \limsup\limits_{n\rightarrow\infty}\frac{1}{n}\log \|M_n^{E}(\omega)\|\leq L(E)}=1. \
\end{equation}

Our  objective is to show that
\begin{equation}\lb{47new}
\mu\set{\omega \; : \; \begin{aligned}
&  \liminf\limits_{n\rightarrow\infty}\frac{1}{n}\log \|M_n^{E}(\omega)\|\geq L(E) \\
& \quad \text{for all {\it generalized eigenvalues}}\  E\in I \end{aligned} }=1.
\end{equation}
To that end, we first note a version of \cite[Theorem 3.10]{DFS}\footnote{The proof of this fact for the model in question is almost identical to that of \cite[Theorem 3.10]{DFS}.} concerning the elimination of double resonances. Denote the Neumann restriction of $H_{\omega}$ to $[t_m, t_n]$ by $H_{[m,n]}(\omega)$\footnote{cf. \eqref{2.13} with $w_m=x_n=0$, $z_m=y_n=1$}. For $\varepsilon\in(0,1)$ and $N\in\bbN$, define
\begin{align}
&\cD_N(\varepsilon)
:=
\set{
\omega\ \Bigg| \ \begin{aligned}
&\qquad\quad\text{there exist\ } E\in I,\ \zeta\in\bbZ_+,\\
&K\geq \max\{\log^2(\zeta+1) , N\},\  0\leq N_1, N_2\leq K^9\text{\ such that:\ }\\
&\begin{cases}
\  |F_m(T^{r+\zeta}\omega,E)|\leq L(E)-\varepsilon\\
\text{and\ } \|(H_{[-N_1, N_2]} (T^\zeta\omega)-E)^{-1}\|\geq e^{K^2}\\
\text{for some\  }  m\in\{K,2K\},\ K^{10}\leq r\leq \overline{K},\\
\text{where } \overline{K} := \left\lfloor K^{\log K} \right\rfloor
\end{cases}
\end{aligned}
}
\end{align}
and $\Omega_3(\varepsilon):=\Omega\setminus\limsup\limits_{N\rightarrow\infty}\cD_N(\varepsilon)$; then one has $\mu(\Omega_3(\varepsilon))=1.$
Define the full measure set
\begin{equation}
\wti\Omega:= \bigcap_{0 < \varepsilon < \varkappa}\ \bigcap_{j=1}^3 \Omega_j(\varepsilon),
\end{equation}
where
\[
\varkappa:= \frac{1}{3} \min_{E \in I} L(E).
\]

Fix $\omega=\{\ell_j,  (I_2, B_j)\}_{j\in\bbZ}\in\wti\Omega$ and a generalized eigenvalue $E\in I$. Then in order  to establish \eqref{47new} it is enough to check
\begin{equation}\lb{416new}
\liminf\limits_{n\rightarrow\infty}\frac{1}{n}\log \|M_n^{E}(\omega)\|\geq L(E).
\end{equation}
Now, let $u$ denote a generalized eigenfunction corresponding to the generalized eigenvalue $E$ and the operator $H_\omega$. That is,
\begin{align}
\begin{cases}-u''=Eu,\\
\text{\ for some\ } C_u>0,\ \max\set{|u'(t_j^{\pm})|,|u(t_j^{\pm})|}\lesssim C_u(1+|j|), \text{\ for all } j\in\bbZ,  \\
\text{and\ } \text{$u$ satisfies the vertex conditions~\eqref{1.1} for all\ }j\in\bbZ.
\end{cases}\lb{417new}
\end{align}
We now follow the blueprint of \cite{DFS}. Given $0<\varepsilon<\varkappa$, the primary goal is to show that
\begin{equation}\lb{418}
\frac{1}{n}\log \|M_n^{E}(\omega)\|\geq L(E)-6\varepsilon,\text{\ whenever $K^{11}+K^{10} \leq n \leq \overline{K}$ }
\end{equation}
for all sufficiently large $K$. The intervals so described cover a half-line, and hence \eqref{418} implies \eqref{416new}.

Given $\zeta\in\bbZ$, define\footnote{In the arguments that follow, $\zeta$ will correspond to the center of localization.}
\begin{equation}\lb{349nn}
K(N):=
\max
\set{\lceil\log^2(|\zeta|+1)\rceil,  n_1, n_2, n_3,N},
\end{equation}
where $N\in\bbN$ will be determined later\footnote{Specifically, $N$ will depend solely on $C_u$, so, if all generalized eigenfunctions are bounded, then $N$ may be chosen independently of $u$.}, $n_1$ and $n_2$ are as discussed near \eqref{413new} and \eqref{45new} (respectively), and $n_3=n_3(\omega, \varepsilon)$ is the minimal integer such that
\begin{equation}\lb{431}
\omega\in \Omega \setminus \cD_j(\varepsilon) \text{ for every } j \geq n_3.
\end{equation}
\begin{step}
There exists $N=N(C_u)>0$ such that for all $K\geq K(N)$, there exist integers $m_1\in[\zeta-K^9, \zeta], m_2\in[\zeta, \zeta+K^9]$ such that
\begin{equation}\lb{350nn}
|u({t_{m_j}^-})|\leq e^{-2K^2},\ |u'({t_{m_j}^-})|\leq e^{-2K^2}
\end{equation}
for $j=1,2.$
\end{step}

\begin{proof}
Using  \eqref{413new} with $n=K^3$ and $\zeta:=\zeta-K^9$ we get
\begin{equation}\lb{421}
L(E) - \frac{  \log\|M^E_{K^3}(T^{\zeta+sK^3}\omega)\|}{K^3} < \varepsilon
\end{equation}
for some $s=:s_1\in[-K^9, -K^3]\cap \bbZ$. Thus,
\begin{equation}\lb{425}
\exp((L(E) -\varepsilon)K^3)<  \|M^E_{K^3}(T^{\zeta+sK^3}\omega)\|
\end{equation}
Likewise, using \eqref{413new} with $n=K^3$, we obtain \eqref{425} for some $s_2\in[0, K^6-1]\cap \bbZ$. Fixing such an $s_2$, we introduce $\alpha$ and $\beta$ via
\begin{equation}\lb{425a}
[\alpha, \beta]:=[\zeta+s_2K^3, \zeta+(s_2+1)K^3],\ m_2:= \left\lfloor\frac{\alpha+ \beta}{2}\right\rfloor.
\end{equation}
We will show that this choice of $m_2$ gives \eqref{350nn} with $j=2$. The proof for $j=1$ relies on \eqref{425} with $s=s_1$ and is completely analogous. Our argument is based on a representation of $u$ in terms of its boundary values $u(t_\alpha^+)$, $u(t_\beta^-)$ and special solutions satisfying specific boundary conditions chosen based on which entry of the matrix
\begin{equation}\lb{356nn}
B_{\beta}^{-1}M^E_{K^3}(T^{\alpha}\omega)
\end{equation}
dominates its norm. 

Thus, there are four cases; we will consider one case and note that the other cases are completely similar. The reader may also consult \cite{DFS} to see what modifications one should make in the other three cases.

To that end, let $m_{ij}$ denote the $ij$ entry of \eqref{356nn}, and suppose $\|B_{\beta}^{-1}M^E_{K^3}(T^{\alpha}\omega)\|\leq 4 |m_{11}|$. In this case, we choose $\psi_\pm$ to satisfy the interior vertex conditions as well as the boundary conditions
\begin{equation}
 \psi'_+({{t_{\beta}^-}})=1,\ \psi_+({{t_{\beta}^-}})=0,\
\psi'_-({{t_{\alpha}^+}})=0, \ \psi_-({{t_{\alpha}^+}})=1,\
\end{equation}
and observe that
\begin{equation}\lb{437n}
|W(\psi_+,\psi_-)|=|\psi_+'({{t_{\alpha}^+}})|=|\psi_-({{t_{\beta}^-}})|=|m_{11}|>0.
\end{equation}
By \eqref{437n}, $\psi_-$ and $\psi_+$ are linearly independent, so we may write
\begin{equation}\lb{436n}
u({t_{m_2}^-})=u'({{t_{\alpha}^+}})\frac{\psi_+({t_{m_2}^-})}{\psi_+'({{t_{\alpha}^+}})}+u({{t_{\beta}^-}})\frac{\psi_-({t_{m_2}^-})}{\psi_-({{t_{\beta}^-}})}.
\end{equation}



Next, we estimate the right-hand side of \eqref{436n}. Putting together \eqref{425} and \eqref{437n}, we obtain
\begin{align}
\begin{split}\lb{447n}
|\psi_+'({{t_{\alpha}^+}})|=|\psi_-({{t_{\beta}^-}})|=|m_{11}|&\geq\frac{\|B_{\beta}^{-1}M^E_{K^3}(T^{\alpha}\omega)\|}{4}\\
&\geq \frac{ \|M^E_{K^3}(T^{\alpha}\omega)\|}{4\|B_{\beta}^{-1}\|}\\
&\gtrsim \exp((L(E) -\varepsilon)K^3).
\end{split}
\end{align}
Next, \eqref{417new} implies
\begin{equation}\lb{425new}
\max\set{|u'({{t_{\alpha}^+}})|, |u({{t_{\beta}^-}})}|\lesssim C_u(K^9+e^{\sqrt {K}}).
\end{equation}
Next, apply \eqref{45new} with $\zeta_0=\zeta+s_2 K^3$ and $n=\lfloor \frac{K^3}{2}\rfloor$, and select $N$ so that $\lfloor \frac{K^3}{2}\rfloor\geq\log^2(\zeta+sK^3)$ to get
\begin{align}
\begin{split}
|{\psi_-({t_{m_2}^-})}|&\leq \left|\left\langle \begin{bmatrix}1 \\ 0 \end{bmatrix}, B^{-1}_{m_2}\,M_{\lfloor \frac{K^3}{2}\rfloor}^E(T^{\zeta+sK^3}\omega)\begin{bmatrix} 1 \\ 0\end{bmatrix} \right \rangle\right|\\
&\lesssim \exp\left(\frac{1}{2} (L(E)+\varepsilon) K^3 \right).\lb{449new}
\end{split}
\end{align}
Similarly, for $N$ sufficiently large, we get
\begin{align}\lb{427}
\begin{split}
\left|\psi_+({t_{m}^-})\right|\lesssim \exp\left( \frac{1}{2} (L(E)+\varepsilon)K^3\right).
\end{split}
\end{align}
Putting together \eqref{436n}, \eqref{447n}, \eqref{449new}, and \eqref{427}, we have
\begin{align}\lb{428}
|u({t_{m}^-})|
\lesssim 2C_u(K^9+e^{\sqrt {K}})\exp\left(-\frac{1}{2}(L(E) - 3\varepsilon)K^3\right)\leq e^{-2K^2},
\end{align}
where the final inequality holds for $N=N(C_u, \mathscr A)$ sufficiently large. Similarly,
\begin{equation}\lb{452n}
|u'({t_{m_2}^-})|\leq e^{-2K^2},
\end{equation}
follows by replacing $u({t_{m_2}^-})$ (respectively, $\psi_{\pm}({t_{m_2}^-})$) by $u'({t_{m_2}^-})$ (respectively, $\psi'_{\pm}({t_{m_2}^-})$) in \eqref{436n}, and $[1,0]^{\top}$ by $[0,1]^{\top}$ in both \eqref{449new} and \eqref{427}.
\end{proof}
\begin{step}
If $|u(\tau^-)|=1$ for some $\tau\in\bbR$, let $\zeta$ be the largest integer for which $t_{\zeta}<\tau$. If $|u(\tau^+)|=1$ for some $\tau\in\bbR$, let $\zeta$ be the largest integer for which $t_{\zeta}\leq \tau$. Let $m_1, m_2$ be as in Step~1. Then
\begin{equation}\lb{3.73newnew}
\|(H_{[m_1, m_2]}({\omega})-E)^{-1}\|_{\cB(L^2(t_{m_1}, t_{m_2}))}\geq e^{K^2}.
\end{equation}
\end{step}
\begin{proof}
There exists a $K$-independent interval $J\subset (t_{\zeta}, t_{\zeta+1})$ such that
\begin{equation}\lb{321a}
1/2\leq  |u(x)|\ \text{for all }  x\in J.
\end{equation}
Suppose that $\psi_{\pm}$ satisfies $-\psi_{\pm}''=E\psi_{\pm}$ in $(t_{m_1},t_{m_2})$, the interior vertex conditions in the interval $(t_{m_1},t_{m_2})$ and
\begin{align}
&\psi_-({{t_{m_1}^+}})=1,\ \psi'_-({{t_{m_1}^+}})=0,\ \psi_+({{t_{m_2}^-}})=1,\ \psi'_+({{t_{m_2}^-}})=0.
\end{align}
Then one has
\begin{align}
\begin{split}
u(x)&=u'(t_{m_1}^+)\frac{\psi_+(x)}{W(\psi_+,\psi_-)}+u'(t_{m_2}^-)\frac{\psi_-(x)}{W(\psi_+,\psi_-)},\  x\in(t_{\zeta},t_{\zeta+1}).\\
\end{split}
\end{align}
Integrating over $J$, using \eqref{321a} and \eqref{350nn} we infer
\begin{align}
\begin{split}
\frac{|J|}{2}&\leq |u'(t_{m_1}^+)|\frac{\int_J|\psi_+(x)|dx}{|W(\psi_+,\psi_-)|}+|u'(t_{m_2}^-)|\frac{\int_J|\psi_-(x)|dx}{|W(\psi_+,\psi_-)|},\\
&= e^{-2K^2}\frac{\int_J|\psi_+(x)|dx}{|W(\psi_+,\psi_-)|}+e^{-2K^2}\frac{\int_J|\psi_-(x)|dx}{|W(\psi_+,\psi_-)|}.\lb{4.53new}
\end{split}
\end{align}
Thus, without loss we may assume
\begin{align}
\begin{split}\lb{4.53newa}
\frac{e^{2K^2}|J|}{4}\leq \frac{\int_J|\psi_-(x)|dx}{|W(\psi_+,\psi_-)|}.
\end{split}
\end{align}
Since $\psi_+(y)=\cos(\sqrt{E}(y-t_{m_2}))$, $y\in (t_{m_2-1}, t_{m_2}]$ we have
\begin{align}\no
&\psi_+(y)\geq 1/2,\ y\in(t_{m_2}-\rho, t_{m_2}],
\end{align}
for a suitable $K$-independent constant $\rho>0$ which is sufficiently small.
Combining the previous two inequalities we get
\begin{align}
\begin{split}
\frac{e^{2K^2}|J|\rho}{8} & \leq \int_{t_{m_2}-\rho}^{ t_{m_2}} \int_J\frac{|\psi_-(x)|\psi_+(y)}{|W(\psi_+,\psi_-)|} \, dx \, dy.
\end{split}
\end{align}
Let   $G_{\omega, [m_1, m_2]}(x,y)$ denote the Green function of $H_{[m_1, m_2]}{(\omega)}$, then
\begin{equation}
G_{\omega, [m_1, m_2]}(x,y)=\frac{\psi_-(x)\psi_+(y)}{W(\psi_+,\psi_-)},\ x\in J, y\in (t_{m_2}-\rho, t_{m_2}).
\end{equation}
Denoting
\begin{equation}
\phi_1:=\chi_{J}\sign(\psi_+ W(\psi_+,\psi_-)),\ \phi_2:=\chi_{(t_{m_2}-\rho, t_{m_2})},
\end{equation}
we get
\begin{align}
e^{K^2}&\leq \frac{|\langle\phi_1, (H_{[m_1, m_2]}{(\omega)}-E)^{-1}\phi_2\rangle_{L^2(t_{m_1}, t_{m_2})}|}{\|\phi_1\|_{L^2(t_{m_1}, t_{m_2})}\|\phi_2\|_{L^2(t_0, t_{m})}}\\
&\leq \|(H_{[m_1, m_2]}{(\omega)}-E)^{-1}\|_{\cB(L^2(t_{m_1}, t_{m_2}))},
\end{align}
for  sufficiently large $N$ in \eqref{349nn}.
\end{proof}

\begin{step} Choose $\zeta$ as in the previous step. For a suitable $N=N(C_u)$, we have
\begin{equation}\lb{443}
\frac{1}{n}\log \|M_n^{E}(T^{\zeta}\omega)\|\geq L(E)-6\varepsilon.
\end{equation}
for every $K\geq K(N)$ and every $K^{11}+K^{10} \leq n\leq  \overline{K}$.
\end{step}
\begin{proof}
By \eqref{431} and \eqref{3.73newnew}, we have
\begin{equation}
\frac{1}{jK}\log \|M_{jK}^{E}(T^{\zeta+r}\omega)\|\geq L(E)-\varepsilon,\ \text{ for each } j=1,2 \text{ and all } K^{10} \leq r\leq \overline{K}.
\end{equation}
This input suffices to apply the Avalanche Principle \cite{GS01} to deduce \eqref{443}. Consult \cite[(6.17)--(6.18)]{BuDaFi} for details.
\end{proof}

Since the intervals in Step~3 cover a half-line, \eqref{416new} and \eqref{a51} follow immediately.

{\it Proof of  Part {\rm(ii)}.}  By Part~(i) and Osceledets'~Theorem, every generalized eigenvalue of $H_\omega$ is an eigenvalue whose corresponding eigenfunctions decay exponentially at the rate dictated by the Lyapunov exponent.

Next, we note that
\begin{equation}
(H_\omega-\bfi)^{-1}\in\cB(L^2(\bbR), L^{\infty}(\bbR)).
\end{equation}
Indeed, assume that $u\in\dom(H_{\omega})$, $-u''-\bfi u=f\in L^2(\bbR)$. Then by Sobolev inequalities  (see, e.g,  \cite[Corollary~4.2.10]{Bu}, \cite[IV.1.2]{K80}) we get
\begin{align}
\begin{split}
&\|u\|_{L^{\infty}(t_j,t_{j+1})}\lesssim {\|u\|_{L^2(t_j,t_{j+1})}}+\|u''\|_{L^2(t_j,t_{j+1})}=\|f\|_{L^2(t_j,t_{j+1})}, j\in\bbZ.
\end{split}
\end{align}
Therefore \cite[Assumption B.1]{HiPo} is satisfied and by \cite[Theorem B.9]{HiPo} the spectral measure of $H_{\omega}$ is supported by the generalized eigenvalues. Thus the eigenfunctions corresponding to generalized eigenvalues in $I$ span the spectral subspace $\ran(\chi_I(H_{\omega}))$.

{\it Proof of  Part {\rm(iii)}.} First, using $-f''=Ef$ and the one-dimensional Sobolev inequalities (see, e.g,  \cite[Corollary~4.2.10]{Bu}, \cite[IV.1.2]{K80}), we have
\begin{align}
\begin{split}\lb{464n}
&\max\set{\|f\|_{L^{\infty}(t_j,t_{j+1})}, \|f'\|_{L^{\infty}(t_j,t_{j+1})}} \\
&\quad\lesssim {\|f\|_{L^2(t_j,t_{j+1})}}+\|f''\|_{L^2(t_j,t_{j+1})} \\
& \quad \lesssim 1,
\end{split}
\end{align}
and
\begin{align}
\begin{split}\lb{377new}
\|f'\|_{L^{\infty}(t_{j}, t_{j+1})}&\lesssim\|f \|_{L^{2}(t_{j}, t_{j+1})}+\|f''\|_{L^{2}(t_{j}, t_{j+1})}\\
&\lesssim\|f \|_{L^{2}(t_{j}, t_{j+1})} \\
&\lesssim \|f \|_{L^{\infty}(t_{j}, t_{j+1})}.
\end{split}
\end{align}
Therefore, after taking
\begin{equation}
\begin{split}\lb{467n}
&u=\frac{f}{\|f\|_{L^{\infty}(\bbR)}},\ C_u\lesssim 1\,\text{\ in Step 1},\\
& \text{and $\tau$  in Step~2 such that either $|f(\tau^-)| = \|f\|_\infty$ or $|f(\tau^+)| = \|f\|_\infty$},
\end{split}
\end{equation}
 we may repeat the arguments from Part~(i). We pick any value of $\tau$ with desired property and note that its existence is guaranteed because
\begin{align}
\set{\begin{bmatrix}
	f(t_j^+)\\
	f'(t_j^+)
	\end{bmatrix}}_{j\in\bbZ}\in \ell^2(\bbZ,\C^2)\text{\ and\ }\lim\limits_{|t|\to\infty}|f(t)| = \lim\limits_{|t|\to\infty}|f'(t)|=0.
\end{align}
So, given $0 < \varepsilon < \varkappa$, we may choose $N=N(\varepsilon, \omega)$ (independent of $f$) so that
\begin{equation}
\frac{1}{n}\log \|M_n^{E}(T^{\zeta}\omega)\|\geq L(E)-6\varepsilon
\end{equation}
 for all $K\geq \max\{ K(N), \log^2(|\zeta|+1)\}$ and all $K^{11}+K^{10} \leq n \leq \overline{K}$.
Our next objective is to show that
\begin{equation}\lb{459n}
|f(t_{\zeta+n}^+)|\lesssim  e^{-(1-\delta)L(E)n}, \text{\ for all\  }n\in\left[\frac{p}{4}, \frac{p-1}{2}\right],
\end{equation}
for all $K\geq K(N)$ and  $K^{11}+K^{10} \leq p \leq \overline{K}$. As before, the proof relies on a representation of $f$ in terms of its boundary values at $t_\zeta$ and $t_{\zeta+p}$, and the choice of representation, depends on which entry of
\begin{equation}
B_{\zeta+p}^{-1}M_p^{E}(T^{\zeta}\omega)
\end{equation}
dominates its norm. We will argue under the assumption that the upper left entry dominates the norm; the other three cases are almost identical.

Choose $\psi_\pm$ as follows: they satisfy the interior vertex conditions in $[t_\zeta,t_{\zeta+p}]$, solve $-\psi_{\pm}''=E\psi_{\pm}$, and satisfy the boundary conditions
\begin{align}
&\psi_-(t_\zeta^+)=1,\ \psi'_-(t_\zeta^+)=0, \psi_+(t_{\zeta+p}^-)=0,\ \psi'_+(t_{\zeta+p}^-)=1.
\end{align}
Notice that
\begin{align}
\begin{split}
|W(\psi_+,\psi_-)|&=|\psi_+'(t_\zeta^+)|=|\psi_-(t_{\zeta+p}^-)|\\
&\geq\frac{\|B_{\zeta+p}^{-1}M_p^{E}(T^{\zeta}\omega)\|}{4}\\
&\geq\frac{\|M_p^{E}(T^{\zeta}\omega)\|}{4\|B_{\zeta+p}\|}\\
& \gtrsim \exp((L(E) -6\varepsilon)p).\lb{463n}
\end{split}
\end{align}

One has
\begin{align}\lb{459}
\frac{f(t_{\zeta+n}^+)}{M_f}=\frac{f'(t_\zeta^{+})\psi_+(t_{\zeta+n}^+)}{M_f\psi_+'(t_\zeta^+)}+\frac{f(t_{\zeta+p}^-)\psi_-(t_{\zeta+n}^+)}{M_f\psi_-(t_{\zeta+p}^-)},
\end{align}
where  $M_f:=\|f \|_{L^{\infty}(\bbR_+)}$.
In order to estimate $\psi_{-}(t_{\zeta+n}^+)$, we rewrite it in terms of the transfer matrices and use \eqref{45new} as follows
\begin{equation}
|\psi_{-}(t_{\zeta+n}^+)|=\left|\left\langle \begin{bmatrix}1 \\ 0 \end{bmatrix} , M_{n}^E(T^{\zeta}\omega)\begin{bmatrix} 1 \\ 0\end{bmatrix} \right \rangle\right|\leq \exp((L(E) +\varepsilon)n).
\end{equation}
Similarly one can estimate $\psi_{+}(t_{\zeta+n}^+)$.
Put this together with \eqref{464n}--\eqref{459} to obtain
\begin{align}
\begin{split}
|f(t_{\zeta+n}^+)|& \lesssim \exp((L(E) +\varepsilon)n-(L(E) -6\varepsilon)p)\\
&\quad\quad+\exp((L(E) +\varepsilon)(p-n)-(L(E) -6\varepsilon)p)\\
& \lesssim  2 e^{-(1-\delta)nL(E)}.
\end{split}
\end{align}
In the final inequality, pick $\varepsilon=\varepsilon(\delta)>0$ sufficiently small (which only depends on $\delta$).  Thus
\begin{equation}\lb{469n}
|f(t_{\zeta+n}^+)|\lesssim  e^{-(1-\delta)L(E)n},
\end{equation}
for all $K\geq K(N)$ and $\frac{K^{11}+K^{10}}{4} \leq n \leq \frac{\overline{K}-1}{2}$. So, for sufficiently large $N$, the inequality in \eqref{469n} holds for all
\begin{equation}
n\geq
\frac{1}{2} \max\set{\log^2(|\zeta|+1), N(\omega,\varepsilon)}^{11} =:R.
\end{equation}
Trivially estimating $f(t_{\zeta+n}^+)$ for $0 \le n \le R$ (that is, using $\|f\|_{L^{\infty}(\bbR)}\lesssim 1$) we obtain
\begin{align}
\begin{split}\lb{3102n}
|f(t_{\zeta+n}^+)| &\lesssim C_{\omega,\delta}e^{C_{\delta}\log^{22}(|\zeta|+1)}e^{-(1-\delta)L(E)n},\ n\geq 0.
\end{split}
\end{align}

Using the representation
\begin{align}
\frac{f'(t_{\zeta+n}^+)}{M_f}=\frac{f'(t_\zeta^{+})\psi'_+(t_{\zeta+n}^+)}{M_f\psi_+'(t_\zeta^+)}+\frac{f(t_{\zeta+p}^-)\psi'_-(t_{\zeta+n}^+)}{M_f\psi_-(t_{\zeta+p}^-)},
\end{align}
we may prove a version of \eqref{3102n} with $f$ replaced by $f'$ by repeating \eqref{463n}--\eqref{3102n}.
Lastly, we infer \eqref{426} for all $x\geq 0$ by interpolation. The same argument applies to the negative half-axis.

{\it Proof of Part {\rm (iv)}.} The argument is essentially identical to the proof of \cite[Theorem 1.1]{DFS}, which in turn stems from  \cite{GdB},  with natural substitution of one-sided intervals by their symmetric two-sided versions.
\end{proof}

	
	\end{document}